\theoremstyle{definition}
 \newtheorem{definition}{Definition}[section]
\theoremstyle{plain}
 \newtheorem{proposition}[definition]{Proposition}
\theoremstyle{plain}
 \newtheorem{theorem}[definition]{Theorem}
\theoremstyle{definition}
\theoremstyle{plain}
 \newtheorem{lemma}[definition]{Lemma}
\theoremstyle{plain}
\theoremstyle{remark}
 \newtheorem{remark}[definition]{Remark}
\theoremstyle{definition}
\theoremstyle{plain}
\newcommand{\Ext}{\mathrm{Ext}}
\newcommand{\End}{\mathrm{End}}
\newcommand{\Mat}{\mathrm{Mat}}
\newcommand{\Hom}{\mathrm{Hom}}
\newcommand{\SHom}{\underline{\Hom}}
\newcommand{\Ca}{\mathcal{C}}
\newcommand{\Fun}{\mathrm{F}}
\newcommand{\Def}{\mathrm{Def}}
\newcommand{\Sets}{\mathrm{Sets}}
\newcommand{\Z}{\mathbb{Z}}
\newcommand{\SEnd}{\underline{\End}}
\newcommand{\A}{\Lambda}
\newcommand{\surjection}{\twoheadrightarrow}
\newcommand{\injection}{\hookrightarrow}
\newcommand{\m}{\mathfrak{m}}
\renewcommand{\k}{\Bbbk}
\renewcommand{\1}{\mathbbm{1}}
\newcommand{\invlim}{\varprojlim}
\newcommand{\M}{\widehat{M}}
\newcommand{\N}{\widehat{N}}
\newcommand{\V}{\widehat{V}}
\newcommand{\W}{\widehat{W}}
\renewcommand{\P}{\widehat{P}}
\newcommand{\Q}{\widehat{Q}}
\newcommand{\Ar}{\widehat{\Lambda}}
\newcommand{\h}{\widehat{h}}
\newcommand{\g}{\widehat{g}}
\title[On a deformation theory of modules over repetitive algebras]{On a deformation theory of finite dimensional modules over repetitive algebras} 
\thanks{This research was partially supported by CODI (Universidad de Antioquia, UdeA), COLCIENCIAS (Convocatoria Doctorados Nacionales 2016, N\'umero 757), and the Office of Academic Affairs at the Valdosta State University.}
\author[Fonce-Camacho]{Adriana Fonce-Camacho}
\address{Instituto de Matem\'aticas, Universidad de Antioquia, Medell{\'\i}n, Antioquia, Colombia}
\email{adrianam.fonce@udea.edu.co}
\author[Giraldo]{Hern\'an Giraldo}
\address{Instituto de Matem\'aticas, Universidad de Antioquia, Medell{\'\i}n, Antioquia, Colombia}
\email{hernan.giraldo@udea.edu.co}
\author[Rizzo]{Pedro Rizzo}
\address{Instituto de Matem\'aticas, Universidad de Antioquia, Medell{\'\i}n, Antioquia, Colombia}
\email{pedro.hernandez@udea.edu.co}
\author[V\'elez-Marulanda]{Jos\'e A. V\'elez-Marulanda (Corresponding author)}
\address{Department of Mathematics, Valdosta State University, Valdosta, GA,  United States of America}
\email{javelezmarulanda@valdosta.edu (Corresponding author)}
\keywords{Repetitive algebras \and (Uni)versal deformation rings \and stable endomorphism rings \and Frobenius categories}
\begin{document}
\renewcommand{\labelenumi}{\textup{(\roman{enumi})}}
\renewcommand{\labelenumii}{\textup{(\roman{enumi}.\alph{enumii})}}
\numberwithin{equation}{section}

\begin{abstract}
Let $\A$ be a basic finite dimensional algebra over an algebraically closed field $\k$, and let $\Ar$ be the repetitive algebra of $\A$.  In this article, we prove that if $\V$ is a left $\Ar$-module with finite dimension over $\k$, then $\V$ has a well-defined versal deformation ring $R(\Ar,\V)$, which is a local complete Noetherian commutative $\k$-algebra whose residue field is also isomorphic to $\k$. We also prove that $R(\Ar, \V)$ is universal provided that $\SEnd_{\Ar}(\V)=\k$ and that in this situation, $R(\Ar, \V)$ is stable after taking syzygies. We apply the obtained results to finite dimensional modules over the repetitive algebra of the $2$-Kronecker algebra, which provides an alternative approach to the deformation theory of objects in the bounded derived category of coherent sheaves over $\mathbb{P}^1_\k$. 
\end{abstract}
\subjclass[2010]{16G10 \and 16G20 \and 16G70}
\maketitle

\section{Introduction}

In this article, we assume that $\k$ is an algebraically closed field. If $f:A\to B$ and $g:B\to C$ are morphisms in a given $\k$-category $\mathscr{C}$, then the composition of $f$ with $g$ is denoted by $g\circ f$, i.e., the usual composition of morphisms. Modules considered in this article are mostly assumed to be finite dimensional over $\k$, and unless explicitly stated, they will be from the left side. If $\Gamma$ is a (not necessarily finite dimensional) $\k$-algebra, then we denote by $\Gamma$-mod the abelian category of finitely generated left $\Gamma$-modules and by $\Gamma$-\underline{mod} its stable category, i.e., the objects of $\Gamma$-\underline{mod} are the same as those in $\Gamma$-mod, and two morphisms $f, g: X\to Y$ between object in $\Gamma$-\underline{mod} are identified if and only if $f-g$ factors through a finitely generated projective left $\Gamma$-module. If $X$ in $\Gamma$-mod has a projective cover, namely $\psi_X: P(X)\to X$, then we denote by $\Omega_\Gamma X$, the kernel of $\psi_X$. Note that in this situation, projective covers are unique up to isomorphism. If $\Gamma$-mod has almost split sequences (in the sense of \cite[Chap. 5]{auslander}), then for all indecomposable left $\Gamma$-modules $X$, we denote by $\tau_\Gamma X$ the {\it Auslander-Reiten translation} of $X$. 

Let $\A$ be a fixed basic finite dimensional $\k$-algebra, and let $\Ar$ be the repetitive algebra of $\A$ (in the sense of \cite{hughes-wash}). It follows from \cite[Chap II, \S 2.2]{happel} that $\Ar$-mod is a {\it Frobenius category} in the sense of \cite[Chap. I, \S 2]{happel}, i.e., $\Ar$-mod is an exact category in the sense of \cite{quillen} and which has enough projective as well as injective objects, and these classes of objects coincide. By \cite{hughes-wash}, the abelian category $\Ar$-mod has almost split sequences. Moreover, it follows by \cite[Chap. II, \S 2.2]{happel} that $\Ar$-\underline{mod} is a triangulated category (in the sense of \cite{verdier}). 

Let $\mathcal{D}^b(\A\textup{-mod})$ be the bounded derived category of $\A$, which is also a triangulated category. Due to a fundamental result by D. Happel, there exists a full and faithful exact functor of triangulated categories  $\mu: \mathcal{D}^b(\A\textup{-mod})\to \Ar\textup{-\underline{mod}}$, such that $\mu$ extends the identity functor on $\A$-mod, where $\A$-mod is embedded in $\mathcal{D}^b(\A\textup{-mod})$ (resp. $\Ar$-\underline{mod}) as complexes (resp. modules) concentrated in degree zero. Moreover, $\mu$ is an equivalence if and only if $\A$ has finite global dimension (see \cite[\S 2.3]{happel2}). 

Let $\mathcal{D}^b(\mathrm{coh}\,\mathbb{P}^n_\k)$ denote the bounded derived category of coherent sheaves over the projective $n$-space $\mathbb{P}^n_\k$.  In \cite{bernstein,beilinson}, I. N. Bernstein et al. and A. A. Beilinson constructed a $\k$-algebra $\A_n$ of finite global dimension such that there exist equivalences of triangulated categories

\begin{align}\label{triangequiv}
\mathcal{D}^b(\mathrm{coh}\,\mathbb{P}^n_\k)\cong \mathcal{D}^b(\A_n\textup{-mod})&&\text{ and }&& \mathcal{D}^b(\mathrm{coh}\,\mathbb{P}^n_\k)\cong \Ar_n\textup{-\underline{mod}}.
\end{align}

The precise relationship between the equivalences in (\ref{triangequiv}) and $\mu$ were completely determined by P. Dowbor and H. Meltzer in \cite{dowbor}. As a consequence, one can pass freely between the triangulated categories $\mathcal{D}^b(\mathrm{coh}\,\mathbb{P}^n_\k)$, $\mathcal{D}^b(\A_n\textup{-mod})$ and $ \Ar_n\textup{-\underline{mod}}$. The quiver with relations of the $\k$-algebra $\A_n$ is available in e.g. \cite[Chap. II, \S 5]{happel}.  In particular, when $n=1$, we have that $\A_1$ is the $2$-Kronecker algebra whose quiver (with empty set of relations) is given as follows:
\begin{equation}\label{kronecker}
Q=\xymatrix@=20pt{
\underset{1}{\bullet}\ar@/^/[rr]^{\alpha}\ar@/_/[rr]_{\beta}&&\underset{2}{\bullet}
}
\end{equation}

Moreover, by using \cite[Theorem on  pg. 428]{schroer}, the quiver with relations $(\widehat{Q}, \widehat{\rho})$ of $\Ar_1$ is given as follows:

\begin{equation}\label{repquiver}
\widehat{Q} =  
\begin{tikzcd}[column sep = 60, row sep = 60]
\cdots&\overset{1_{z-1}}{\bullet}\arrow[d, bend left=15,"\alpha_{z-1}"]\arrow[d, bend right=15,"\beta_{z-1}"']&\overset{1_z}{\bullet}\arrow[d, bend left=15,"\alpha_z"]\arrow[d, bend right=15,"\beta_z"']&\overset{1_{z+1}}{\bullet}\arrow[d, bend left=15,"\alpha_{z+1}"]\arrow[d, bend right=15,"\beta_{z+1}"']&\\
&\underset{2_{z-1}}{\bullet}\arrow[ul, bend left=10,"\alpha_{z-1}^\ast", near start]\arrow[ul, bend right=10,"\beta_{z-1}^\ast"',near end]&\underset{2_z}{\bullet}\arrow[ul, bend left=10,"\alpha_z^\ast", near start]\arrow[ul, bend right=10,"\beta_z^\ast"',near end]&\underset{2_{z+1}}{\bullet}\arrow[ul, bend left=10,"\alpha_{z+1}^\ast", near start]\arrow[ul, bend right=10,"\beta_{z+1}^\ast"',near end]&\cdots\arrow[ul, bend left=10,"\alpha_{z+2}^\ast", near start]\arrow[ul, bend right=10,"\beta_{z+2}^\ast"',near end]
\end{tikzcd}
\end{equation}

\begin{equation*}
\widehat{\rho}=\{\beta_z^\ast\alpha_z,\alpha_{z-1}\beta_z^\ast,\alpha_z^\ast\beta_z, \beta_{z-1}\alpha_z^\ast,\alpha^\ast_z\alpha_z-\beta^\ast_z\beta_z, \alpha_{z-1}\alpha^\ast_z-\beta_{z-1}\beta^\ast_z: z\in \Z\}.
\end{equation*}

Observe that we read the paths in $\Q$ from the right to the left. Note as well that $\Ar_1$ is a special biserial $\k$-algebra (as introduced in \cite{wald}). This implies that the indecomposable objects and the irreducible morphisms in $\Ar_1$-\underline{mod} can be described combinatorially by using so-called string and bands for $\Ar_1$; the corresponding objects are called string and band $\Ar_1$-modules  (see \cite{buri}). This also implies the $\k$-vector space $\Hom_{\Ar_1}(\V, \V')$, where $\V$ and $\V'$ are indecomposable finite dimensional left $\Ar_1$-modules, can be described completely by using the results in \cite{krause}. 

On the other hand, in \cite[Prop. 2.1]{blehervelez}, F. M. Bleher and the fourth author developed a deformation theory of finitely generated modules over finite dimensional $\k$-algebras based on the deformation theory of Galois representation developed by B. Mazur in \cite{mazur0,mazur}. More precisely, they proved that if $V$ is a finitely generated left $\A$-module, then $V$ has a well-defined versal deformation ring $R(\A,V)$ which is a local complete Noetherian commutative $\k$-algebra whose residue field is also isomorphic to $\k$, and that $R(\A,V)$ is universal provided that the endomorphism ring of $V$ is isomorphic to $\k$. Moreover, they also proved in \cite[Thm.1.1]{blehervelez} that if $\A$ is further a Frobenius algebra and $V$ has stable endomorphism ring $\SEnd_\A(V)$ isomorphic to $\k$, then $R(\A,V)$ is universal and that the versal deformation rings $R(\A,V)$, $R(\A, \Omega_\A V)$ and $R(\A, V\oplus P)$ are isomorphic, where $P$ is an arbitrary finitely generated projective left $\A$-module. These results together with the combinatorial tools provided by special biserial $\k$-algebras have been used in order to study the deformation theory of finite dimensional modules over a number of finite dimensional self-injective $\k$-algebras (see e.g. \cite{bleher9,bleher15,calderon-giraldo-rueda-velez} and \cite{velez}).  

Although a deformation theory of $\mathrm{coh}\,\mathbb{P}^1_\k$ under the language of algebraic geometry has been studied by several authors (see e.g. \cite{nitsure} and its references), we aim to provide through this paper an alternative approach that uses the aforementioned machinery from representation theory of associative algebras together with the relationship between the equivalences (\ref{triangequiv}) and the Happel's functor $\mu$, such that we are able to arrive to a better understanding of the deformation theory of objects in $\mathcal{D}^b(\mathrm{coh}\,\mathbb{P}^1_\k)$.  In order to do so, we extend the deformation theory developed in \cite{blehervelez} to that of finite dimensional left $\Ar$-modules.  More precisely, our main result is the following.

\begin{theorem}\label{thm1}
Let $\V$ be a left $\Ar$-module with finite dimension over $\k$. Then $\V$ always has a well-defined versal deformation ring $R(\Ar,V)$. Assume further that $\SEnd_{\Ar}(\V)=\k$. Then we have the following.
\begin{enumerate}
\item The versal deformation ring $R(\Ar,\V)$ is universal, and for all finitely dimensional projective $\Ar$-modules $\P$, the versal deformation ring $R(\Ar,\V\oplus\P)$ is also universal and isomorphic to $R(\Ar,\V)$. 
\item The versal deformation ring $R(\Ar,\Omega_{\Ar}\V)$ is also universal and isomorphic to $R(\Ar,\V)$.
\item If $\V$ is indecomposable, then the versal deformation ring $R(\Ar, \tau_{\Ar}\V)$ is universal and isomorphic to $R(\Ar, \V)$.     
\end{enumerate}
\end{theorem}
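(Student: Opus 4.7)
The plan is to extend the deformation-theoretic framework of \cite{blehervelez} from finite dimensional self-injective $\k$-algebras to the infinite dimensional repetitive algebra $\Ar$. The main observation that makes this adaptation possible is that every indecomposable projective $\Ar$-module is finite dimensional over $\k$, so the projective cover $\psi_{\V}:P(\V)\surjection\V$ in $\Ar$-mod is finite dimensional, and hence so is the syzygy $\Omega_{\Ar}\V$. This confines all the relevant deformation data to the finite dimensional world, where the Frobenius-category arguments of \cite{blehervelez} can be reused with only superficial modifications.

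For existence, I would define the covariant functor $\Def_{\Ar}(\V,-):\hat{\mathcal{C}}\to\Sets$, where $\hat{\mathcal{C}}$ is the category of complete local Noetherian commutative $\k$-algebras with residue field $\k$: on an Artinian object $R$, a lift of $\V$ to $R$ is an equivalence class of pairs $(M,\phi)$, where $M$ is an $R\otimes_\k\Ar$-module that is free of rank $\dim_\k\V$ over $R$ and $\phi:\k\otimes_R M\to \V$ is an $\Ar$-module isomorphism; for general $R$ I would set $\Def_{\Ar}(\V,R)=\invlim_n\Def_{\Ar}(\V,R/\m_R^n)$. Next I would verify Schlessinger's criteria (H1)--(H3) together with finite dimensionality of the tangent space $\Def_{\Ar}(\V,\k[\epsilon])\cong \Ext^1_{\Ar}(\V,\V)\cong \SHom_{\Ar}(\Omega_{\Ar}\V,\V)$; the latter is a $\k$-subquotient of the finite dimensional space $\Hom_{\Ar}(\Omega_{\Ar}\V,\V)$ and is therefore finite dimensional. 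The fiber-product arguments behind (H1)--(H3) transfer directly from \cite[Prop. 2.1]{blehervelez} because each lift remains finite dimensional over $\k$.

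Once existence is in hand, the three remaining statements follow the template of \cite[Thm. 1.1]{blehervelez}. Universality under the hypothesis $\SEnd_{\Ar}(\V)=\k$ is Schlessinger's (H4) and is proved exactly as in \cite[Prop. 2.5]{blehervelez}, the obstruction class living in $\SEnd_{\Ar}(\V)/\k=0$. For invariance under the syzygy, I would use that $\Ar$-mod is a Frobenius category, so that any lift $(M,\phi)$ of $\V$ to $R$ admits a projective cover $P(M)\surjection M$ in $R\otimes_\k\Ar$-mod whose reduction modulo $\m_R$ is $\psi_{\V}$; the kernel of $P(M)\surjection M$ is then a lift of $\Omega_{\Ar}\V$ to $R$, and this construction yields a natural isomorphism $\Def_{\Ar}(\V,-)\cong \Def_{\Ar}(\Omega_{\Ar}\V,-)$, whose inverse is obtained dually via injective envelopes (which coincide with projective covers in the Frobenius category $\Ar$-mod). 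For the projective summand, finitely generated projective $\Ar$-modules are rigid, so any lift of $\V\oplus\P$ decomposes canonically as $M\oplus(R\otimes_\k\P)$, yielding $R(\Ar,\V\oplus\P)\cong R(\Ar,\V)$. For the AR-translate, I would combine syzygy invariance with the fact that in the Frobenius category $\Ar$-mod the Auslander-Reiten translation $\tau_{\Ar}$ factors, up to auto-equivalence, as a composite of a power of $\Omega_{\Ar}$ with the Nakayama auto-equivalence $\nu_{\Ar}$; since $\nu_{\Ar}$ is induced by an algebra automorphism of $\Ar$ (the shift along the repetition), it preserves the deformation functor by transport of structure, and the result for $\tau_{\Ar}\V$ follows.

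The principal obstacle is Step~1: carefully verifying Schlessinger's criteria and the identification of the tangent space with $\Ext^1_{\Ar}(\V,\V)$ despite the infinite dimensionality of $\Ar$. Once the finite dimensionality of projective covers of finite dimensional $\Ar$-modules is exploited to reduce each calculation to a finite dimensional one, the remainder of the argument is a faithful adaptation of the Frobenius-category arguments developed in \cite{blehervelez}.
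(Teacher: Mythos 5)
Your proposal is correct and follows essentially the same route as the paper: define the deformation functor on $\widehat{\Ca}$, verify Schlessinger's criteria (H$_1$)--(H$_4$) by adapting the Frobenius-category arguments of Bleher--V\'elez using the fact that finite dimensional $\Ar$-modules have finite dimensional projective covers, identify the tangent space with $\Ext^1_{\Ar}(\V,\V)$, and then obtain (i)--(iii) via splitting off lifted projective summands, taking kernels of lifted projective covers for $\Omega_{\Ar}$, and combining $\tau_{\Ar}=\nu_{\Ar}\Omega^2_{\Ar}$ with invariance under the shift automorphism $\nu_{\Ar}$. The only minor divergence is that you establish (H$_3$) via the finite dimensionality of $\SHom_{\Ar}(\Omega_{\Ar}\V,\V)$, whereas the paper embeds $\widehat{\Fun}_{\Ar,\V}(R)$ componentwise into a product of deformation functors of the finite dimensional $\A$-modules $V_j$ and invokes \cite[Prop.~2.1]{blehervelez}; both arguments are valid.
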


As a consequence of Theorem \ref{thm1}, we obtain the following result, which by our discussion above, it provides some results that may be useful in order to arrive to a better understanding of the deformation theory of $\mathcal{D}^b(\mathrm{coh}\,\mathbb{P}^1_\k)$.

\begin{theorem}\label{thm2}
Let $\V$ be a finite dimensional string $\Ar_1$-module, where $\Ar_1 = \k \widehat{Q}/\langle \widehat{\rho}\rangle$ is as in (\ref{repquiver}). Let  $\Gamma_S(\Ar_1)$  be the stable Auslander-Reiten quiver of $\Ar_1$ and $\mathfrak{C}$ be the connected component of $\Gamma_S(\Ar_1)$ containing $\V$. 
\begin{enumerate}
\item The component $\mathfrak{C}$ of $\Gamma_S(\Ar_1)$ contains either a simple $\Ar_1$-module, or a string $\Ar_1$-module $\M[C]$ where $C$ is an arrow of $\widehat{Q}$. 
\item If $\mathfrak{C}$ contains a simple $\Ar_1$-module, then $\End_{\Ar_1}(\V)=\k$ and $R(\Ar_1,\V)\cong \k$. 
\item If $\mathfrak{C}$ contains a string $\Ar_1$-module $\M[C]$ with $C$ an arrow of $\Q$, then $\SEnd_{\Ar_1}(\V)=\k$ if and only if $\V$ is isomorphic to $\M[C]$. In this situation, $R(\Ar_1,\V)\cong \k[\![t]\!]$. 
\end{enumerate}  
\end{theorem}

This article is organized as follows. In \S \ref{sec2}, we review some preliminary results concerning the repetitive algebras of finite dimensional algebras; our main source is \cite{happel}. In \S \ref{sec3}, following \cite{blehervelez}, we introduce lifts and deformations of finite dimensional modules over repetitive algebras and prove Theorem \ref{thm1}.  Finally, in  \S \ref{sec4}, we review some important aspects of the representation theory of the repetitive algebra of the $2$-Kronecker algebra as in (\ref{repquiver}) and prove Theorem \ref{thm2}. 

We refer the reader to look at e.g \cite{auslander} or \cite{assem3} in order to get further information regarding concepts from the representation theory of associative algebras such as irreducible morphisms, almost split sequences and Auslander-Reiten quivers. For an extended historical background on the deformation theory of modules and for an approach to the deformation theory of complexes over finite dimensional algebras  we invite the reader to look at \cite{blehervelez2}.

This article constitutes the doctoral dissertation of the first author under the supervision of the other three authors.  

\section{Preliminary results}\label{sec2}

Recall that throughout this article, we assume that $\k$ is a fixed algebraically closed field of arbitrary characteristic. 

Let $\A$ be a fixed basic finite dimensional $\k$-algebra.  Recall that $\A$-mod denotes the abelian category of finitely generated left $\A$-modules.  We denote by $D(-)=\Hom_\k(-,\k)$ the standard $\k$-duality on $\A$-mod. In particular $D\A$ is the minimal injective cogenerator of $\A$-mod. Note that $D\A$ is also a finitely generated $\A$-$\A$-bimodule in the following way. For all $\varphi \in D\A$, and $a', a''\in \A$, $a'\varphi a''$ is the $\k$-linear morphism that sends each $a\in\A$ to $\varphi(a''aa')$.    

\subsection{Modules over repetitive algebras}\label{repetitivealgebra}

In the following, we recall that definition of the {\it repetitive algebra} $\Ar$ of $\A$:
\begin{enumerate}
\item the underlying $\k$-vector space of $\Ar$  is given by $\Ar=\left(\bigoplus_{i\in \Z} \A\right)\oplus \left(\bigoplus_{i\in \Z} D\A\right)$;
\item the elements of $\Ar$ are denoted by $(a_i, \varphi_i)_i$, where $a_i\in \A$, $\varphi_i\in D\A$ and almost all of the $a_i, \varphi_i$ are zero;  
\item the product of two elements $(a_i, \varphi_i)_i$ and $(b_i,\psi_i)_i$ in $\Ar$ is defined as 
\[(a_i,\varphi_i)_i\cdot (b_i,\psi_i)_i= (a_ib_i, a_{i+1}\psi_i+\varphi_ib_i)_i.\]
\end{enumerate}

Although the repetitive algebra of a finite dimensional $\k$-algebra was originally introduced by  D. Hughes and J. Waschb\"usch in \cite{hughes-wash}, in this article, we mostly follow the notation in \cite[Chap. II, \S 2]{happel}. We refer the reader to \cite[Chap. II, \S 2]{happel} for an interpretation of $\Ar$ as a doubly infinite matrix $\k$-algebra. 

The $\Ar$-modules are of the form $\V=(V_i,f_i)_{i\in \Z}$, where for each $i\in \Z$, $V_i$ is a finitely generated $\A$-module, all but finitely many being zero, and $f_i$ is the morphism of $\A$-modules $f_i:D\A\otimes_\A V_i\to V_{i+1}$ such that $f_{i+1}\circ(\mathrm{id}_{D\A}\otimes f_i)=0$, where $\mathrm{id}_{D\A}$ denotes the identity morphism on $D\A$.  Moreover,  we can visualize a $\Ar$-module $\V=(V_i,f_i)$ as follows:

\begin{equation}\label{tensorcomplx}
\xymatrix@=20pt{
\cdots\ar@{~>}[r]&V_{i-2}\ar@{~>}[r]^{f_{i-2}}&V_{i-1}\ar@{~>}[r]^{f_{i-1}}&V_i\ar@{~>}[r]^{f_i}&V_{i+1}\ar@{~>}[r]^{f_{i+1}}&V_{i+2}\ar@{~>}[r]&\cdots
}
\end{equation} 

A morphism $\h:\V\to \V'$ of $\Ar$-modules is a sequence $\h=(h_i)_{i\in \Z}$ of morphisms of $\A$-modules $h_i: V_i\to V_i'$ such that, for all $i\in \Z$, $h_{i+1}\circ f_i=f_i'\circ (\mathrm{id}_{D\A}\otimes h_i)$.
By using the description (\ref{tensorcomplx}), we can also visualize morphisms $\h:\V\to \V'$ between $\Ar$-modules as follows:

\begin{equation}\label{tensorcomplxmorph1}
\xymatrix@=20pt{
\cdots\ar@{~>}[r]&V_{i-2}\ar@{~>}[r]^{f_{i-2}}\ar[d]^{h_{i-2}}&V_{i-1}\ar@{~>}[r]^{f_{i-1}}\ar[d]^{h_{i-1}}&V_i\ar@{~>}[r]^{f_i}\ar[d]^{h_i}&V_{i+1}\ar@{~>}[r]^{f_{i+1}}\ar[d]^{h_{i+1}}&V_{i+2}\ar@{~>}[r]\ar[d]^{h_{i+2}}&\cdots\\
\cdots\ar@{~>}[r]&V'_{i-2}\ar@{~>}[r]^{f'_{i-2}}&V'_{i-1}\ar@{~>}[r]^{f'_{i-1}}&V'_i\ar@{~>}[r]^{f'_i}&V'_{i+1}\ar@{~>}[r]^{f'_{i+1}}&V'_{i+2}\ar@{~>}[r]&\cdots
}
\end{equation} 

In the situation of (\ref{tensorcomplxmorph1}), we call each $h_i$ the {\it $i$-th component morphism} of $\h$.

\begin{remark}\label{rem1.1}
Let $\Ar$-mod denote the abelian category of modules over $\Ar$. 

\begin{enumerate}
\item It follows by \cite[Chap II, \S 2.2]{happel} that $\Ar$-mod is a {\it Frobenius category} in the sense of \cite[Chap. I, \S 2]{happel}, i.e., $\Ar$-mod is an exact category in the sense of \cite{quillen} which has enough projective as well as injective objects, and these classes of objects coincide. 

\item By \cite[Prop. 6]{giraldo} we have that the indecomposable projective-injective $\Ar$-modules can be represented as follows:
\begin{equation}\label{projindec}
\xymatrix@=20pt{
\cdots\ar@{~>}[r]&0\ar@{~>}[r]&\A\epsilon\ar@{~>}[r]^{f}&D(\epsilon\A)\ar@{~>}[r]&0\ar@{~>}[r]&\cdots,
}
\end{equation}
where $\epsilon$ is a primitive orthogonal idempotent element in $\A$ and $f$ is the natural isomorphism of $\A$-modules $f:D\A\otimes_\A\A\epsilon\to D(\epsilon\A)$.  Note in particular that this description implies that the indecomposable projective $\Ar$-modules are finite dimensional over $\k$. Recall that we denote by $\Ar$-\underline{mod} the stable category of $\Ar$-mod. For all objects $\V$ in $\Ar$-mod, we denote by $\Omega^{-1}_{\Ar}\V$ the first cosyzygy of $\V$, i.e., $\Omega^{-1}_{\Ar}\V$ is the cokernel of an injective $\Ar$-module hull $\V\to I(\V)$, which is unique up to isomorphism. It follows from \cite[Chap. I, \S 2.2]{happel} that $\Omega^{-1}_{\Ar}$ induces an automorphism of $\Ar$-\underline{mod}. We denote by $\Omega_{\Ar}$ the quasi-inverse of $\Omega_{\Ar}^{-1}$. Since $\Ar$-mod is a Frobenius category, it follows that $\V$ has a projective cover $P(\V)\xrightarrow{\widehat{\psi}} \V$, which is unique up to isomorphism and that  $\Omega_{\Ar}\V$ is the kernel of $\widehat{\psi}$. 

\item It follows from \cite[Chap. I, \S 2.6]{happel} that $\Ar$-\underline{mod} is a triangulated category in the sense of \cite{verdier} whose translation functor is $\Omega^{-1}_{\Ar}$. 

\item Let $T\A$ be the {\it trivial extension} of $\A$, i.e., $T\A=\A\oplus D\A$ as $\k$-vector spaces and the product is defined as 
\begin{equation*}
(a,\varphi)\cdot (b, \psi)=(ab,a\psi+\varphi b), 
\end{equation*} 
for all $a,b\in \A$ and $\varphi,\psi\in D\A$.
It was noted in \cite[Chap. II, \S2.4]{happel} that $T\A$ is a $\Z$-graded algebra, where the elements of $\A\oplus 0$ are the elements of degree $0$ and those in $0\oplus D\A$ are the elements of degree $1$.  Moreover, if $T\A^{\Z}$-mod is the category of finitely generated $\Z$-graded $T\A$-modules with morphisms of degree $0$, then the categories $\Ar$-mod and $T\A^{\Z}$-mod are equivalent. Moreover, by the remarks in \cite[Chap. VI, \S 2]{maclane}, it follows that if $\V=(V_i,f_i)_{i\in \Z}$, $\V'=(V'_i,f'_i)_{i\in \Z}$ and $\V''=(V''_i,f''_i)_{i\in \Z}$ are $\Ar$-modules, then 
$0\to \V\xrightarrow{\h} \V'\xrightarrow{\h'} \V''\to 0$ is exact in $\Ar$-mod if and only if $0\to V_i\xrightarrow{h_i}V'_i\xrightarrow{h'_i}V''_i\to 0$ is exact in $\A$-mod such that $h_{i+1}\circ f_i=f_i'\circ (\mathrm{id}_{D\A}\otimes h_i)$ and $h_{i+1}'\circ f_i'=f_i''\circ (\mathrm{id}_{D\A}\otimes h_i')$ for all $i\in \Z$.

\item There is an automorphism $\nu_{\Ar}: \Ar\textup{-\underline{mod}}\to\Ar\textup{-\underline{mod}}$ defined as follows: for all objects $\widehat{V}=(V_i,f_i)_{i\in \Z}$ as in (\ref{tensorcomplx}), $\nu_{\Ar}\V$ is defined as $(\nu_{\Ar}\V)_i= V_{i+1}$. It follows from the arguments in \cite[\S 2.3]{hughes-wash} that $\nu_{\Ar}\P = D\Ar\otimes_{\Ar}\P\cong D\Hom_{\Ar}(\P,\Ar)$ for all finite dimensional projective $\Ar$-modules $\P$ (cf. \cite[Prop. III.5.2]{skow3}). Moreover, by \cite[\S 2.5]{hughes-wash}, the abelian category $\Ar$-mod has almost split sequences, and consequently $\Ar$-\underline{mod} has Auslander-Reiten triangles in the sense of \cite[Chap. I, \S 4]{happel}. On the other hand, it is noted in \cite[pg. 1614]{happkellrei} that the translation $\tau_{\Ar}$ is $D\mathrm{Tr}$ (as in \cite[Chap. IV]{auslander}). Therefore, if $\V$ is an indecomposable non-projective $\Ar$-module, then we have $\tau_{\Ar}\V=\nu_{\Ar}\Omega^2_{\Ar}\V$. The proof of this fact can be obtained by using the above discussion concerning $\nu_{\Ar}$, by using that every projective $\Ar$-module is also injective, and by adapting the arguments in the proof of \cite[Prop. IV.3.7 a)]{auslander} to our context.

\item Let $\V$ be a finite dimensional $\Ar$-module. Since $\Ar$-mod is a Frobenius category, it follows that for all $n>1$, $\Ext_{\Ar}^n(\V,\P)=0$ for all finite dimensional projective $\Ar$-modules $\P$. Now if  $\W$ is also a finite dimensional $\Ar$-module, then by adapting the arguments in the proof of \cite[Lemma 5.1]{skart} to our context, we obtain an isomorphism of $\k$-vectors spaces $\Ext_{\Ar}^1(\V,\W)=\SHom_{\Ar}(\Omega_{\Ar}\V,\W)$.

\end{enumerate}
\end{remark}

\section{Lifts and deformations of modules over repetitive algebras}\label{sec3}

We denote by $\widehat{\Ca}$ the category of all complete local commutative Noetherian $\k$-algebras with residue field $\k$. In particular, the morphisms in $\widehat{\Ca}$ are continuous $\k$-algebra homomorphisms that induce the identity map on $\k$. For all  objects $R$ in $\widehat{\Ca}$, we denote by $R\A$ and $R\Ar$ the tensor product of $\k$-algebras $R\otimes_\k\A$ and $R\otimes_\k \Ar$, respectively.

Let $\V$ be a $\Ar$-module with finite dimension over $\k$, and let $R$ be a fixed but arbitrary object in $\widehat{\Ca}$. 

\begin{definition}{(cf. \cite[\S 2]{blehervelez})}\label{def3.1}

\begin{enumerate}
\item A {\it lift} $(\M,\widehat{\phi})$ of $\V$ over $R$ is a finitely generated $R\Ar$-module $\M$ which is free over $R$ together with an isomorphism of $\Ar$-modules $\widehat{\phi}: \k\otimes_R\M\to \V$.
\item Two lifts $(\M,\widehat{\phi})$ and $(\M', \widehat{\phi}')$ of $\V$ over $R$ are said to be {\it isomorphic} provided that there exists an isomorphism of $R\Ar$-modules $\widehat{\alpha}:\M\to \M'$ such that $\widehat{\phi}'\circ(\mathrm{id}_\k\otimes \widehat{\alpha})=\widehat{\phi}$. 
\item If $(\M,\widehat{\phi})$ is a lift of $\V$ over $R$, we denote by $[\M,\widehat{\phi}]$ its isomorphism class and call it a {\it deformation} of $\V$ over $R$. We denote by $\mathrm{Def}_{\Ar}(\V,R)$ the set of all deformations of $\V$ over $R$. 
\end{enumerate}
\end{definition}

\begin{remark}\label{rem3.2}
Let $D_R(R\A)$ be the $R$-dual $\Hom_R(R\A,R)=R\otimes_\k D\A$. Note that $D_R(R\A)$ is in particular an $R\A$-$R\A$-bimodule. We denote by $\pi_{D\A,R}: \k \otimes_RD_R(R\A)\to D\A$ the natural isomorphism of tensor products.

\begin{enumerate}
\item If $(\M,\widehat{\phi})$ is a lift of $\V$ over $R$, then after using the identification 
\begin{equation*}
\k\otimes_R(D_R(R\A)\otimes_{R\A}M_i)= (\k\otimes_RD_R(R\A))\otimes_\A(\k\otimes_RM_i), 
\end{equation*}
we have that $\M$ can be represented as 
\begin{equation}\label{tensorcomplxM}
\xymatrix@=20pt{
\cdots\ar@{~>}[r]&M_{i-2}\ar@{~>}[r]^{g_{i-2}}&M_{i-1}\ar@{~>}[r]^{g_{i-1}}&M_i\ar@{~>}[r]^{g_i}&M_{i+1}\ar@{~>}[r]^{g_{i+1}}&M_{i+2}\ar@{~>}[r]&\cdots,
}
\end{equation} 
where for each $i\in \Z$, $g_i: D_R(R\A)\otimes_{R\A}M_i\to M_{i+1}$ is a morphism of $R\A$-modules such that  $\phi_{i+1}\circ (\mathrm{id}_\k\otimes g_i)= f_i\circ (\pi_{D\A,R}\otimes_\A\phi_i)$.  Moreover, since $\M$ is free over $R$, it follows that for each $i\in \Z$, the $R\A$-module $M_i$ is free over $R$, and by Remark \ref{rem1.1} (iv), it follows $\phi_i:\k\otimes_RM_i\to V_i$ is an isomorphism of $\A$-modules. Therefore, for all $i\in \Z$,  the pair $(M_i, \phi_i)$ is a lift of $V_i$ over $R$ in the sense of \cite[\S 2]{blehervelez}. 

\item Let $(\M, \widehat{\phi})$ and $(\M', \widehat{\phi}')$ be isomorphic lifts of $\V$ over $R$, and let $\widehat{\alpha}: \M\to \M'$ be an isomorphism of $R\Ar$-modules such that $ \widehat{\phi}'\circ(\mathrm{id}_\k\otimes \widehat{\alpha})=\widehat{\phi}$. Then by using the description (\ref{tensorcomplxM}), we have that  $\widehat{\alpha}: \M\to \M'$ can be represented as 

\begin{equation}\label{tensorcomplxmorph}
\xymatrix@=20pt{
\cdots\ar@{~>}[r]&M_{i-2}\ar@{~>}[r]^{g_{i-2}}\ar[d]^{\alpha_{i-2}}&M_{i-1}\ar@{~>}[r]^{g_{i-1}}\ar[d]^{\alpha_{i-1}}&M_i\ar@{~>}[r]^{g_i}\ar[d]^{\alpha_i}&M_{i+1}\ar@{~>}[r]^{g_{i+1}}\ar[d]^{\alpha_{i+1}}&M_{i+2}\ar@{~>}[r]\ar[d]^{\alpha_{i+2}}&\cdots\\
\cdots\ar@{~>}[r]&M'_{i-2}\ar@{~>}[r]^{g'_{i-2}}&M'_{i-1}\ar@{~>}[r]^{g'_{i-1}}&M'_i\ar@{~>}[r]^{g'_i}&M'_{i+1}\ar@{~>}[r]^{g'_{i+1}}&M'_{i+2}\ar@{~>}[r]&\cdots
}
\end{equation} 
where for each $i\in \Z$, $\alpha_i: M_i\to M'_i$ is an isomorphism of $R\A$-modules such that $\alpha_{i+1}\circ g_i= g_i'\circ \alpha_i$ and $\phi'_i\circ (\mathrm{id}_\k\otimes \alpha_i)=\phi_i$. In particular, for all $i\in \Z$, $\alpha_i:M_i\to M_i'$ induces an isomorphism of the lifts $(M_i, \phi_i)$ and $(M_i', \phi_i')$ of $V_i$ over $R$ in the sense of \cite[\S 2]{blehervelez}.  
\end{enumerate}
\end{remark}

\begin{definition}{(cf. \cite[\S 2]{blehervelez})}
The {\it deformation functor} corresponding to $\V$ is the 
covariant functor $\widehat{\Fun}_{\Ar,\V}:\hat{\Ca}\to \Sets$ defined as follows:
\begin{enumerate}
\item  for all objects $R$ in $\hat{\Ca}$, define $\widehat{\Fun}_{\Ar,\V}(R)=\Def_
{\Ar}(\V,R)$; 
\item for all morphisms $\theta:R\to 
R'$ in $\hat{\Ca}$, 
let $\widehat{\Fun}_{\Ar,\V}(\theta):\Def_{\Ar}(\V,R)\to \Def_{\Ar}(\V,R')$ be defined as 
\begin{equation*}
\widehat{\Fun}_{\Ar,\V}(\theta)([\M,\widehat{\phi}])=[R'\otimes_{R,\theta}\M,\widehat{\phi}_\theta],
\end{equation*} 
where $\widehat{\phi}_\theta: \k\otimes_{R'}
(R'\otimes_{R,\theta}\M)\to \V$ is the composition of $\A$-module isomorphisms
\begin{equation*} 
\k\otimes_{R'}(R'\otimes_{R,\theta}\M)\cong \k\otimes_R\M\xrightarrow{\widehat{\phi}} V.
\end{equation*}
\end{enumerate}
\item We denote by $\Fun_{\Ar,\V}$ the restriction of $\widehat{\Fun}_{\Ar, \V}$ to the full subcategory of Artinian objects in $\widehat{\Ca}$. 
\end{definition} 
\begin{remark}\label{rem3.3}

\begin{enumerate}
\item If $\V$ has exactly one non-zero term, say a $\A$-module $V$, then $\widehat{\Fun}_{\Ar,\V}$ coincides with the deformation functor $\widehat{\Fun}_{\A, V}$ in the sense of \cite[\S 2]{blehervelez}.
\item Since the $\Ar$-module $\V$ has finite $\k$-dimension, we can assume that $\V$ is of the form 
\begin{equation}\label{tensorcomplx2}
\xymatrix@=40pt{
0\ar@{~>}[r]&V_n\ar@{~>}[r]^{f_n}&V_{n+1}\ar@{~>}[r]^{f_{n+1}}&\cdots\ar@{~>}[r]^{f_{n+m-2}}&V_{n+m-1}\ar@{~>}[r]^{f_{n+m-1}}&V_{n+m}\ar@{~>}[r]&0,
}
\end{equation} 
where $n,m\in \Z$. For all $n\leq j\leq n+m$ , denote by $\widehat{\Fun}_{\A,V_j}$ the deformation functor corresponding to the finite dimensional $\A$-module $V_j$ as in \cite[\S 2]{blehervelez}. Then by Remark \ref{rem3.2} (ii), there is a well-defined injective map between set of deformations 
\begin{align}\label{deformmap}
\Xi_R: \widehat{\Fun}_{\Ar,\V}(R)&\to\prod_{j=n}^{n+m}\widehat{\Fun}_{\A,V_j}(R)\\
[\M,\widehat{\phi}]&\mapsto \{[M_j, \phi_j]\}_{j=n}^{n+m}\notag
\end{align}

\item Let $\k[\epsilon]$, with $\epsilon^2=0$, denote the the ring of dual numbers over $\k$. The {\it tangent space} of $\widehat{\Fun}_{\Ar, \V}$ and of $\Fun_{\Ar, \V}$ is defined to be the set $t_{\V}=\Fun_{\Ar, \V}(\k[\epsilon])$. By \cite[Lemma 2.10]{sch}, $t_{\V}$ has an structure of $\k$-vector space. Moreover, by using the injective map  (\ref{deformmap}) together with \cite[Prop. 2.1]{blehervelez}, we obtain that $\dim_\k t_{\V}<\infty$.
\end{enumerate}

\end{remark}

Following \cite[\S 14]{mazur}, we say that a functor $H: \widehat{\Ca}\to \Sets$ is {\it continuous} provided that for all objects $R$ in $\widehat{\Ca}$, 
\begin{equation}\label{cont}
H(R)=\invlim_iH(R/\mathfrak{m}_R^i),
\end{equation}
where $\mathfrak{m}_R$ denotes the unique maximal ideal of $R$. 



We need to recall the following definition from \cite[Def. 1.2]{sch}.

\begin{definition}\label{defi3.7}
Let $\theta:R\to R_0$ be a morphism of Artinian objects in $\widehat{\Ca}$. We say that $\theta$ is a {\it small extension} if the kernel of $\theta$ is a non-zero principal ideal $tR$ that is annihilated by the unique maximal ideal $\m_R$ of $R$.
\end{definition}

In what follows, we adjust the ideas in \cite{blehervelez} in order to prove that $\Fun_{\Ar,\V}$ satisfies Schlessinger's criteria (H$_1$)-(H$_3$), and that if $\SEnd_{\Ar}(\V)=\k$, then $\Fun_{\Ar,\V}$ also satisfies (H$_4$) (see \cite[Thm. 2.11]{sch}).  More precisely, for all pullback diagrams of Artinian objects in $\widehat{\Ca}$, 

\begin{equation}\label{pullbackart}
\xymatrix@=20pt{
&R'''\ar[dl]_{\pi'}\ar[dr]^{\pi''}&\\
R'\ar[dr]_{\theta'}&&R''\ar[dl]^{\theta''}\\
&R&
}
\end{equation}
the induced map
\begin{equation}\label{thetapullback}
\Theta: \Fun_{\Ar, \V}(R''')\to \Fun_{\Ar, \V}(R') \times_{\Fun_{\Ar, \V}(R)}\Fun_{\Ar, \V}(R'')
\end{equation}
satisfies the following properties:
\begin{enumerate}
\item[(H$_1$)] $\Theta$ is surjective whenever $\theta'':R''\to R$ is a small extension as in Definition \ref{defi3.7};
\item[(H$_2$)] $\Theta$ is a bijective when $R=\k$ and $R''=\k[\epsilon]$, where $\k[\epsilon]$ denotes the ring of dual numbers over $\k$ as in Remark \ref{rem3.3} (ii);
\item[(H$_3$)] the $\k$-vector space $t_{\V}$ as in Remark \ref{rem3.3} (iii) is finite dimensional.
\end{enumerate}
If $\SEnd_{\Ar}(\V)=\k$, then the following property is also satisfied: 
\begin{enumerate}
\item[(H$_4$)] $\Theta$ is a bijection when $R'=R''$ and $\theta':R'\to R$ is a small extension. 
\end{enumerate}

\begin{remark}\label{rem3.4}
Note that by Remark \ref{rem3.3} (iii), we have already that $\dim _\k t_{\V}<\infty$, and thus Schlessinger's criterion (H$_3$) is satisfied by $\Fun_{\Ar,\V}$. 
\end{remark}
We first need to prove the following series of results, some of them previously obtained for finite dimensional modules over Frobenius $\k$-algebras in \cite{blehervelez}. It is important to recall that $\Ar$ is an infinite dimensional $\k$-algebra without identity. 

\begin{lemma}\label{lemma3.6}
Let $R$ be a fixed Artinian object in $\widehat{\Ca}$. 
\begin{enumerate} 
\item If $\P$ is a projective left (resp. right) $\Ar$-module with finite dimension over $\k$, then the $R\Ar$-module $\P_R=R\otimes_{\k, \iota_R}\P$ defines a projective $R\Ar$-module cover of $\P$, namely $\widehat{\eta}:\P_R\to \P$, where $\iota_R:\k \to R$ is the unique morphism of Artinian objects in $\widehat{\Ca}$ that endows $R$ with a $\k$-algebra structure. Moreover, $(\P_R, \pi_{R, \P})$ defines a lift of $\P$ over $R$, where $\pi_{R,\P}$ is the natural isomorphism of $\Ar$-modules $\k\otimes_R\P_R\to \P$. 
\item If $\Q$ is a indecomposable projective $R\Ar$-module that is free with finite rank over $R$, then there exists an orthogonal primitive idempotent $\epsilon^R$ in $R\A$ such that $\Q$ is of the form
\begin{equation*}
\xymatrix@=40pt{
\cdots\ar@{~>}[r]&0\ar@{~>}[r]&(R\A)\epsilon^R\ar@{~>}[r]^{f_R}&D_R(\epsilon^R(R\A))\ar@{~>}[r]&0\ar@{~>}[r]&\cdots.
}
\end{equation*} 
\item If $\Q$ is a projective left $R\Ar$-module that is free with finite rank over $R$, then $D_R(\Q)= \Hom_R(\Q, R)$ is also a projective right $R\Ar$-module which is also free with finite rank over $R$. 
\item Let $\theta: R\to R_0$ be a surjection of Artinian rings in $\widehat{\Ca}$, and let $\Q_0$ be a projective left (resp. right) $R_0\Ar$-module that is free with finite rank over $R_0$. Then there exists a projective left (resp. right) $R\Ar$-module $\Q$ with finite rank over $R$ and an isomorphism $\h: R_0\otimes_{R,\theta}\Q\to \Q_0$ of $R_0\Ar$-modules induced by $\theta$. 
\end{enumerate}
\end{lemma}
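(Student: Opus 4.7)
The plan is to treat the four parts in sequence, with part (ii) being the key technical step that the later parts will rely on.

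Part (i) is essentially a base-change argument. Since $\P$ is a finitely generated projective $\Ar$-module, it is a direct summand of some $\Ar^n$, so $\P_R=R\otimes_\k\P$ is a direct summand of $(R\Ar)^n$ and hence a projective $R\Ar$-module. It is free over $R$ of rank $\dim_\k\P$ because it is obtained by extension of scalars from a finite-dimensional $\k$-vector space. The composition of the augmentation $R\to\k$ with the identity on $\P$ induces a surjection $\widehat{\eta}:\P_R\to\P$ whose kernel is $\m_R\P_R$. Since $R$ is Artinian local, $\m_R$ is nilpotent, so $\m_R R\Ar$ is contained in the Jacobson radical of $R\Ar$, which implies that the kernel of $\widehat{\eta}$ is superfluous; thus $\widehat{\eta}$ is a projective cover, and $(\P_R,\pi_{R,\P})$ is a lift of $\P$ over $R$ in the sense of Definition \ref{def3.1}.

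For part (ii), I would first show that the reduction $\Q_0:=\k\otimes_R\Q$ is an indecomposable projective $\Ar$-module. Projectivity is preserved by base change, and indecomposability follows because idempotents in $\End_{\Ar}(\Q_0)$ lift to idempotents in $\End_{R\Ar}(\Q)$ modulo the nilpotent ideal $\m_R\End_{R\Ar}(\Q)$. By Remark \ref{rem1.1} (ii), $\Q_0$ is thus of the form \eqref{projindec} for some primitive idempotent $\epsilon$ of $\A$. Using that the surjection $R\A\to\A$ has nilpotent kernel, I would lift $\epsilon$ to a primitive idempotent $\epsilon^R\in R\A$. The two-term complex of left $R\Ar$-modules built from $(R\A)\epsilon^R$ and $D_R(\epsilon^R(R\A))$ (with the natural morphism $f_R$ induced by the $R\A$-isomorphism $D_R(R\A)\otimes_{R\A}(R\A)\epsilon^R\cong D_R(\epsilon^R(R\A))$) is then an indecomposable projective $R\Ar$-module realized as a direct summand of $R\Ar$ corresponding to the primitive idempotent $\epsilon^R$, and its reduction modulo $\m_R$ is isomorphic to $\Q_0$. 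Uniqueness of projective covers then yields an isomorphism with $\Q$.

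For part (iii), I would first decompose any projective left $R\Ar$-module $\Q$ that is free of finite rank over $R$ into a finite direct sum of indecomposable summands of the type produced in (ii); this follows because the decomposition of $1_{R\A}$ into primitive orthogonal idempotents (lifted from that of $1_\A$ across the nilpotent kernel of $R\A\to\A$) induces the corresponding direct-sum decomposition. Since $D_R(-)=\Hom_R(-,R)$ commutes with finite direct sums and preserves freeness of finite rank over $R$, it suffices to check that the $R$-dual of each indecomposable summand is a projective right $R\Ar$-module. The $R$-dual of the two-term complex from (ii) can be computed explicitly, yielding an indecomposable projective right $R\Ar$-module of analogous shape, with the roles of $(R\A)\epsilon^R$ and $D_R(\epsilon^R(R\A))$ suitably swapped and dualized. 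The main obstacle here will be the bookkeeping between left and right modules and verifying that the dualized structure map gives the correct right action.

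Finally, part (iv) reduces to a lifting-of-idempotents argument. Applying (ii) over $R_0$, I decompose $\Q_0$ into indecomposable summands, each encoded by a primitive idempotent $\epsilon^{R_0}\in R_0\A$. Since $\theta:R\to R_0$ is a surjection of Artinian local rings, its kernel is contained in $\m_R$ and therefore is nilpotent, so each $\epsilon^{R_0}$ lifts to a primitive orthogonal idempotent $\epsilon^R\in R\A$. Assembling the corresponding indecomposable projectives from (ii) produces the desired $\Q$, together with the canonical reduction isomorphism $\h:R_0\otimes_{R,\theta}\Q\to\Q_0$. The right-module case is entirely symmetric.
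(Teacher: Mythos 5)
Your proposal is correct and follows essentially the same route as the paper: reduce modulo the maximal ideal, identify the reduction with the explicit idempotent description of the indecomposable projectives, lift idempotents across the nilpotent kernel, and dualize the two-term presentation. The only cosmetic differences are that the paper takes $\epsilon^R=1_R\otimes\epsilon$ directly rather than lifting $\epsilon$, reduces part (iv) to small extensions before citing the Theorem on Lifting Idempotents, and in part (iii) closes the ``bookkeeping'' you defer by identifying $D_R(\P_R)$ with $R\otimes_{\k,\iota_R}D\P$ and invoking the Frobenius property of $\Ar$-mod (injectives coincide with projectives).
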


\begin{proof}
The statement (i) is straightforward. To prove (ii), note first that the natural projection $R\to \k$ induces a surjection of $\Q\to \k\otimes_R\Q$, where $\k\otimes_R\Q$ is an indecomposable projective $\Ar$-module. It follows from \cite[Prop. 6]{giraldo} that there exists an orthogonal primitive idempotent element $\epsilon$ in $\A$ such that $\P=\k\otimes_R\Q$ is as in (\ref{projindec}). It follows then that $\epsilon^R=1_R\otimes\epsilon$ is an orthogonal primitive idempotent in $R\A$ and thus the left $R\Ar$-module
\begin{equation}\label{projecR}
\P_R: \xymatrix@=40pt{
\cdots\ar@{~>}[r]&0\ar@{~>}[r]&(R\A)\epsilon^R\ar@{~>}[r]^{f_R}&D_R(\epsilon^R(R\A))\ar@{~>}[r]&0\ar@{~>}[r]&\cdots.
}
\end{equation} 
is a projective left $R\Ar$-module. The result now follows by Nakayama's Lemma and the fact that $\P_R$ induces a projective $R\Ar$-module cover of $\k\otimes_R\Q$ by (i).  To prove (iii), it is enough to assume that $\P$ is an indecomposable $\Ar$-module, and thus $\P$ is as in (\ref{projindec}). Thus $\P_R$ is a projective left $R\Ar$-module which has the form as in (\ref{projecR}). Note that $(R\A)\epsilon^R$ (resp. $\epsilon^R(R\A$)) is a projective left (resp. right) $R\A$-module. Then $D_R(\P_R)$ has the form 

\begin{equation}
\xymatrix@=40pt{
\cdots\ar@{~>}[r]&0\ar@{~>}[r]&\epsilon^R(R\A)\ar@{~>}[r]^{D_R(f_R)}&D_R((R\A)\epsilon^R)\ar@{~>}[r]&0\ar@{~>}[r]&\cdots,
}
\end{equation}

This implies that $D_R(\P_R)= R\otimes_{\k, \iota_R}D\P$, where $D\P$ is an indecomposable injective right $\Ar$-module. Since $\Ar$-mod is a Frobenius category, it follows that $D\P$ is also a projective right $\Ar$-module and thus by the dual of (i), we obtain that $D_R(\P_R)$ is a projective right $R\Ar$-module. Finally, to prove (iv), we can assume without loss of generality that $\theta$ is a small extension as in Definition \ref{defi3.7} and that $\Q_0$ is an indecomposable projective left $R_0\Ar$-module. Then by (iii), there exists an orthogonal primitive idempotent $\epsilon^{R_0}\in R_0\Ar$ such that $\Q_0$ is of the form 
\begin{equation}\label{projecR0}
\xymatrix@=40pt{
\cdots\ar@{~>}[r]&0\ar@{~>}[r]&(R_0\A)\epsilon^{R_0}\ar@{~>}[r]^{f_{R_0}}&D_{R_0}(\epsilon^{R_0}(R_0\A))\ar@{~>}[r]&0\ar@{~>}[r]&\cdots.
}
\end{equation} 
Since $\theta: R\to R_0$ is a small extension, there exists a short exact sequence of $R$-modules
\begin{equation}\label{sessext}
0\to tR\to R\xrightarrow{\theta} R_0\to 0. 
\end{equation}
After tensoring (\ref{sessext}) with $\A$ over $\k$, we obtain a short exact sequence of $R$-modules
\begin{equation*}
0\to tR\otimes_\k\A\to R\A\xrightarrow{\theta} R_0\A\to 0. 
\end{equation*}
This implies that $R\A/(tR\otimes_\k\A)\cong R_0\A$. On the other hand, we also have that $\m_R\otimes_\k\A\subseteq \mathrm{rad}\,R\A$.  Therefore the conditions of the Theorem on Lifting Idempotents in \cite[\S 6.7]{curtis} are satisfied.  Hence, there exists an orthogonal idempotent element $\epsilon^R\in R\A$ that induces $\epsilon^{R_0}$ under the natural projection $R\A\to R_0\A$. Let $\Q$ be the left $R\Ar$-module as in (\ref{projecR}). Thus $\Q$ is an indecomposable projective left $R\Ar$-module such that there exists an isomorphism $h:R_0\otimes_{R,\theta}\Q\to \Q_0$ induced by $\theta$. This finishes the proof of Lemma \ref{lemma3.6}.
\end{proof}

Note that Lemma \ref{lemma3.6} (iii) provides a version of \cite[Claim 7]{blehervelez}, where the definition of a Frobenius $\k$-algebra and \cite[Thm. 2.38]{curtis} are used. In our proof, we use instead the description of the indecomposable projective $\Ar$-modules as in (\ref{projindec}). 

The following result uses the same ideas as those in the proof of \cite[Claim 1]{blehervelez} and the fact that the projective $\Ar$-modules are also injective. 

\begin{lemma}\label{lemma3.7}
Let $\theta: R\to R_0$ be a surjection between Artinian objects in $\widehat{\Ca}$. Let $\M$, $\Q$ (resp. $\M_0$ , $\Q_0$) be left $R\Ar$-modules (resp. $R_0\Ar$-modules) that are free over $R$ (resp. $R_0$) with finite rank, and assume that $\Q$ and $\Q_0$ are projective. Suppose further that there are isomorphisms of left $R_0\Ar$-modules $ \g: R_0\otimes_{R,\theta}\M\to \M_0$ and $\h: R_0\otimes_{R, \theta}\Q\to \Q_0$. If $\widehat{\nu}_0\in \Hom_{R_0\Ar}(\M_0,\Q_0)$, then there exists $\widehat{\nu}\in \Hom_{R\Ar}(\M, \Q)$ such that $\widehat{\nu}_0=\h\circ(\mathrm{id}_{R_0}\otimes_{R,\theta}\widehat{\nu})\circ \g^{-1}$.
\end{lemma}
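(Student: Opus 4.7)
The plan is to reduce the lifting problem to an $\Ext^{1}$-vanishing statement and then exploit that $\Ar$-mod is a Frobenius category. First, since every surjection $\theta:R\to R_{0}$ of Artinian objects in $\widehat{\Ca}$ factors as a finite chain of small extensions, I would reduce to the case where $\theta$ itself is a small extension in the sense of Definition \ref{defi3.7}, so that $\ker(\theta)=tR$ is one-dimensional over $\k$ and annihilated by $\m_{R}$; in particular $tR\cdot \M\cong \k\otimes_{R}\M$ and $tR\cdot \Q\cong \k\otimes_{R}\Q$ canonically, since $\M$ and $\Q$ are $R$-free.

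Next, using the isomorphisms $\g$ and $\h$, I would identify $\M_{0}$ with $\M/t\M$ and $\Q_{0}$ with $\Q/t\Q$, so that $\widehat{\nu}_{0}$ corresponds to an $R_{0}\Ar$-linear map $\overline{\nu}:\M/t\M\to \Q/t\Q$. Pre-composing with the natural projection $\pi_{\M}:\M\to \M/t\M$ yields an $R\Ar$-linear map $\widetilde{\mu}:\M\to \Q/t\Q$, and the required $\widehat{\nu}$ is exactly a preimage of $\widetilde{\mu}$ under $(\pi_{\Q})_{*}:\Hom_{R\Ar}(\M,\Q)\to \Hom_{R\Ar}(\M,\Q/t\Q)$, where $\pi_{\Q}:\Q\to \Q/t\Q$ is the natural projection. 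The long exact sequence of $\Ext$ attached to the short exact sequence $0\to t\Q\to \Q\to \Q/t\Q\to 0$ shows that such a preimage exists once $\Ext^{1}_{R\Ar}(\M,t\Q)=0$.

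The main obstacle is precisely this relative $\Ext$ vanishing. Using the identification $t\Q\cong \k\otimes_{R}\Q$, one sees (as in the proof of Lemma \ref{lemma3.6}(ii)) that $t\Q$ is a projective $\Ar$-module; because $\Ar$-mod is a Frobenius category by Remark \ref{rem1.1}(i), $t\Q$ is also injective as an $\Ar$-module. To bridge between this $\Ar$-injectivity and the vanishing of the $R\Ar$-Ext, I would argue that any $R\Ar$-extension $0\to t\Q\to \W\to \M\to 0$ splits as a sequence of $R$-modules because $\M$ is $R$-free; writing $\W\cong t\Q\oplus \M$ as $R$-modules, the $\Ar$-structure is encoded by a cocycle $\sigma:\Ar\to \Hom_{R}(\M,t\Q)$, and since $\m_{R}\cdot t\Q=0$ each $\sigma(a)$ factors through $\M/\m_{R}\M$. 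This places the extension class in $\Ext^{1}_{\Ar}(\M/\m_{R}\M,t\Q)$, which vanishes by the $\Ar$-injectivity of $t\Q$; a coboundary adjustment of the splitting then produces an $R\Ar$-linear splitting of $\W$, proving $\Ext^{1}_{R\Ar}(\M,t\Q)=0$ and yielding the desired $\widehat{\nu}$.
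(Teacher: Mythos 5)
Your argument is correct, but it follows a genuinely different route from the one the paper intends. The paper's proof of Lemma \ref{lemma3.7} transports \cite[Claim 1]{blehervelez}: there one treats an arbitrary surjection $\theta$ in a single step via duality and adjunction, writing $\Hom_{R\Ar}(\M,\Q)\cong \Hom_R\bigl(D_R(\Q)\otimes_{R\Ar}\M,\,R\bigr)$ with $D_R(\Q)=\Hom_R(\Q,R)$, invoking Lemma \ref{lemma3.6}\,(iii) --- the point where the Frobenius property of $\Ar$-mod enters --- to see that $D_R(\Q)$ is a projective right $R\Ar$-module, so that $D_R(\Q)\otimes_{R\Ar}\M$ is a free $R$-module compatible with base change, and concluding from the surjectivity of $\Hom_R(F,R)\to\Hom_{R_0}(R_0\otimes_RF,R_0)$ for $F$ free over $R$. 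You instead d\'evisse $\theta$ into small extensions, identify the obstruction to lifting $\widehat{\nu}_0$ as a class in $\Ext^1_{R\Ar}(\M,t\Q)$, and kill it using that $t\Q\cong\k\otimes_R\Q$ is projective, hence injective, in $\Ar$-mod. Your version makes the obstruction space visible (it is essentially $\Ext^1_{\Ar}(\k\otimes_R\M,\k\otimes_R\Q)$, the same mechanism that drives Lemma \ref{lemma3.11}), at the cost of the step-by-step reduction and the cocycle bookkeeping; the paper's version is shorter once the duality lemma is available and needs no reduction to small extensions. Two points you should tighten: first, since $\Ar$ has no identity element, the explicit cocycle $\sigma:\Ar\to\Hom_R(\M,t\Q)$ is delicate, and it is cleaner to replace that computation by the base-change isomorphism $\Ext^1_{R\Ar}(\M,N)\cong\Ext^1_{\Ar}(\k\otimes_R\M,N)$, valid because $N=t\Q$ is killed by $\m_R$ and $\M$ is free over $R$ (alternatively, pass to the equivalent category of graded $T\A$-modules of Remark \ref{rem1.1}\,(iv), where the algebra is unital); second, you should record that every $R\Ar$-homomorphism $\M\to\Q/t\Q$ annihilates $t\M$ because $t$ annihilates $\Q/t\Q$, which is what identifies $\Hom_{R\Ar}(\M,\Q/t\Q)$ with $\Hom_{R_0\Ar}(\M_0,\Q_0)$ and guarantees that a preimage of $\widetilde{\mu}$ really satisfies $\widehat{\nu}_0=\h\circ(\mathrm{id}_{R_0}\otimes_{R,\theta}\widehat{\nu})\circ\g^{-1}$.
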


As in the proof of \cite[Claim 2]{blehervelez}, we use Lemma \ref{lemma3.6} (iv) together with Lemma \ref{lemma3.7} to obtain the following result.

\begin{lemma}\label{lemma3.8} 
Let $\theta$, $\M$, $\M_0$ and $\g$ be as in Lemma \ref{lemma3.7}. If $\widehat{\sigma}_0\in \End_{R_0\Ar}(\M_0)$ factors through a projective left $R_0\Ar$-module, then there exists $\widehat{\sigma}\in \End_{R\Ar}(\M)$ such that $\widehat{\sigma}$ factors through a projective left $R\Ar$-module and $\widehat{\sigma}_0=\g\circ (\mathrm{id}_{R_0}\otimes_{R,\theta}\widehat{\sigma})\circ \g^{-1}$.
\end{lemma}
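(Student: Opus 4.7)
The plan is to lift the projective factorization of $\widehat{\sigma}_0$ piece-by-piece, mirroring the argument of \cite[Claim 2]{blehervelez}. By assumption, $\widehat{\sigma}_0 = \widehat{\beta}_0\circ\widehat{\alpha}_0$ for some finitely generated projective left $R_0\Ar$-module $\Q_0$, $\widehat{\alpha}_0\in\Hom_{R_0\Ar}(\M_0,\Q_0)$, and $\widehat{\beta}_0\in\Hom_{R_0\Ar}(\Q_0,\M_0)$. Since $\M_0$ has finite rank over $R_0$, the image of $\widehat{\alpha}_0$ is contained in a finitely generated summand of $\Q_0$; using Lemma \ref{lemma3.6} (i)--(ii) we may thus assume that $\Q_0$ is free of finite rank over $R_0$.

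I would then invoke Lemma \ref{lemma3.6} (iv) to lift $\Q_0$ to a projective left $R\Ar$-module $\Q$, free of finite rank over $R$, together with an isomorphism $\h: R_0\otimes_{R,\theta}\Q\to\Q_0$. Lemma \ref{lemma3.7}, applied to the data $(\M,\Q,\M_0,\Q_0,\g,\h)$ and $\widehat{\nu}_0=\widehat{\alpha}_0$, produces $\widehat{\alpha}\in\Hom_{R\Ar}(\M,\Q)$ with $\widehat{\alpha}_0=\h\circ(\mathrm{id}_{R_0}\otimes_{R,\theta}\widehat{\alpha})\circ\g^{-1}$.

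The morphism $\widehat{\beta}_0$ goes \emph{out} of a projective rather than into one, so Lemma \ref{lemma3.7} does not apply directly; instead I would exploit the projectivity of $\Q$ as an $R\Ar$-module, which is guaranteed by the construction in Lemma \ref{lemma3.6} (i). Viewing everything as $R\Ar$-modules via $\theta$, the canonical composition $\M\to R_0\otimes_{R,\theta}\M\xrightarrow{\g}\M_0$ is a surjection of $R\Ar$-modules, since $\theta$ is surjective and $\M$ is free over $R$. The projectivity of $\Q$ then yields $\widehat{\beta}\in\Hom_{R\Ar}(\Q,\M)$ lifting the $R\Ar$-module composition $\Q\to R_0\otimes_{R,\theta}\Q\xrightarrow{\h}\Q_0\xrightarrow{\widehat{\beta}_0}\M_0$. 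A short chase with the canonical maps, using that $R_0\otimes_{R,\theta}-$ is the quotient by $(\ker\theta)\cdot-$, translates the commutativity of that lifting diagram into the desired identity $\widehat{\beta}_0=\g\circ(\mathrm{id}_{R_0}\otimes_{R,\theta}\widehat{\beta})\circ\h^{-1}$.

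Setting $\widehat{\sigma}=\widehat{\beta}\circ\widehat{\alpha}$ gives an endomorphism of $\M$ that visibly factors through the projective left $R\Ar$-module $\Q$, and combining the two compatibility identities immediately yields $\widehat{\sigma}_0=\g\circ(\mathrm{id}_{R_0}\otimes_{R,\theta}\widehat{\sigma})\circ\g^{-1}$, completing the proof. The main obstacle is the treatment of $\widehat{\beta}_0$, for which no symmetric form of Lemma \ref{lemma3.7} is available in the excerpt; the resolution is precisely the observation that projectivity of $\Q$ over $R\Ar$ together with the surjection $\M\twoheadrightarrow\M_0$ suffices, bypassing the need for a dual lifting lemma.
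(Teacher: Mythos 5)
Your argument is correct and is essentially the paper's: the authors likewise prove this by combining Lemma \ref{lemma3.6} (iv) (to lift the projective $\Q_0$ to $\Q$) with Lemma \ref{lemma3.7} (to lift the map into the projective), exactly as in \cite[Claim 2]{blehervelez}, where the map out of the projective is lifted via projectivity of $\Q$ and the surjection $\M\twoheadrightarrow\M_0$, just as you do.
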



As a consequence of Lemma \ref{lemma3.8}, we obtain the following result, which can be proved by using the same ideas as those in the proof of \cite[Claim 1]{blehervelez}.

\begin{lemma}\label{lemma3.9}
Assume that $\SEnd_{\Ar}(\V)=\k$ and that $R$ is an Artinian ring in $\widehat{\Ca}$. If $(\M,\widehat{\phi})$ and $(\M',\widehat{\phi}')$ are lifts of $\V$ over $R$ such that there is an isomorphism of $R\Ar$-modules $\widehat{h}_0:\M\to \M'$, then there exists an isomorphism of $R\Ar$-modules $\widehat{h}: \M\to \M'$ such that $\widehat{\phi}'\circ (\mathrm{id}_\k\otimes\widehat{h}) =\widehat{\phi}$, i.e., $[\M, \widehat{\phi}]=[\M',\widehat{\phi'}]$ in $\Fun_{\Ar,\V}(R)$. 
\end{lemma}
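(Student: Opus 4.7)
The plan is to modify $\widehat{h}_0$ by post-composing with a carefully constructed $R\Ar$-automorphism of $\M'$ whose reduction modulo $\m_R$ captures the discrepancy between $\widehat{h}_0$ and a morphism of lifts. First, I would set
\[
\widehat{\tau} := \widehat{\phi}' \circ (\mathrm{id}_\k \otimes \widehat{h}_0) \circ \widehat{\phi}^{-1} \in \End_{\Ar}(\V),
\]
which is an automorphism of $\V$. Since $\SEnd_{\Ar}(\V) = \k$ by hypothesis, the natural projection $\pi: \End_{\Ar}(\V) \to \SEnd_{\Ar}(\V) = \k$ produces a scalar $c = \pi(\widehat{\tau}) \in \k$ together with an endomorphism $\widehat{\sigma}_0 \in \End_{\Ar}(\V)$ that factors through a projective $\Ar$-module, such that $\widehat{\tau} = c \cdot \mathrm{id}_{\V} + \widehat{\sigma}_0$.

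The decisive step is to observe that $c \neq 0$. Because $\widehat{\tau}$ is an automorphism of $\V$, its image $\pi(\widehat{\tau}) = c$ must be a unit in the ring $\SEnd_{\Ar}(\V) = \k$, since $c \cdot \pi(\widehat{\tau}^{-1}) = \pi(\mathrm{id}_\V) = 1$; this forces $c \neq 0$. Since $R$ is a local $\k$-algebra with residue field $\k$, the structure map $\k \to R$ supplies a canonical lift $c_R \in R$ of $c$, and $c_R$ is a unit because $c \neq 0$ and $R$ is local.

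Next, I would apply Lemma \ref{lemma3.8} to the surjection $\theta: R \to \k$, the lift $(\M', \widehat{\phi}')$ (with $\g = \widehat{\phi}'$), and the endomorphism $\widehat{\sigma}_0$ which factors through a projective $\Ar$-module. This yields $\widehat{\sigma}_R \in \End_{R\Ar}(\M')$ that factors through a projective $R\Ar$-module and satisfies $\widehat{\phi}' \circ (\mathrm{id}_\k \otimes \widehat{\sigma}_R) \circ \widehat{\phi}'^{-1} = \widehat{\sigma}_0$. I then define $\widehat{\tau}_R := c_R \cdot \mathrm{id}_{\M'} + \widehat{\sigma}_R \in \End_{R\Ar}(\M')$. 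By construction, $\widehat{\tau}_R$ reduces under $\widehat{\phi}'$ to $\widehat{\tau}$. Because $\M'$ is a finitely generated free $R$-module, the determinant of $\widehat{\tau}_R$ (viewed as an $R$-linear endomorphism) is an element of $R$ whose image in $\k$ is $\det(\widehat{\tau}) \neq 0$; since $R$ is local, $\det(\widehat{\tau}_R)$ is a unit in $R$, so $\widehat{\tau}_R$ is an $R$-linear automorphism of $\M'$, and its inverse is automatically $R\Ar$-linear.

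Finally, I would set $\widehat{h} := \widehat{\tau}_R^{-1} \circ \widehat{h}_0 : \M \to \M'$, which is an isomorphism of $R\Ar$-modules, and verify directly that
\[
\widehat{\phi}' \circ (\mathrm{id}_\k \otimes \widehat{h}) = \widehat{\phi}' \circ (\mathrm{id}_\k \otimes \widehat{\tau}_R^{-1}) \circ (\mathrm{id}_\k \otimes \widehat{h}_0) = \widehat{\tau}^{-1} \circ \widehat{\phi}' \circ (\mathrm{id}_\k \otimes \widehat{h}_0) = \widehat{\tau}^{-1} \circ \widehat{\tau} \circ \widehat{\phi} = \widehat{\phi},
\]
which shows that $[\M, \widehat{\phi}] = [\M', \widehat{\phi}']$ in $\Fun_{\Ar,\V}(R)$. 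The main anticipated obstacle is the argument that $c \neq 0$: it is precisely here that the assumption $\SEnd_{\Ar}(\V) = \k$ is indispensable, since it forces the image of any automorphism of $\V$ in the stable category to be a unit in $\k$, which is in turn what permits lifting $c$ to a unit $c_R$ and assembling the correcting automorphism $\widehat{\tau}_R$.
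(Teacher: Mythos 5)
Your argument is correct and is essentially the standard one the paper invokes (by reference to the corresponding claim in Bleher--V\'elez-Marulanda): decompose the induced automorphism of $\V$ as a nonzero scalar plus an endomorphism factoring through a projective, lift the projective part via Lemma \ref{lemma3.8} applied to $R\to\k$, and correct $\widehat{h}_0$ by the resulting automorphism of $\M'$, which is invertible because its reduction modulo $\m_R$ is. All steps, including the key observation that $c\neq 0$ since the quotient map $\End_{\Ar}(\V)\to\SEnd_{\Ar}(\V)=\k$ sends units to units, check out.
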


The following result can be proved by using the ideas in that of \cite[Prop. 4.3]{bleher14}. However, we decided to include a proof for the convenience of the reader.  

\begin{lemma}\label{lemma3.10}
Let $\V$ and $R$ be as in Lemma \ref{lemma3.9}. If $(\M, \widehat{\phi})$ is a lift of $\V$ over $R$, then the ring homomorphism $\sigma_{\M}:R\to\SEnd_{R\Ar}(\M)$ coming from the action of $R$ on $\M$ via scalar multiplication is surjective.
\end{lemma}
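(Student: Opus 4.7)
My proof will proceed by induction on the length $\ell(R)$ of $R$ as a $\k$-module. The base case $R=\k$ is immediate: under the identification $\widehat{\phi}:\k\otimes_R\M\to\V$, the map $\sigma_\M$ becomes the inclusion $\k\hookrightarrow\SEnd_{\Ar}(\V)=\k$, which is an isomorphism by hypothesis. For the inductive step I will factor the augmentation $R\to\k$ through a small extension $\theta:R\to R_0$ as in Definition \ref{defi3.7}, with principal kernel $tR$ annihilated by $\m_R$; the quotient $\M_0=R_0\otimes_{R,\theta}\M$ is a lift of $\V$ over $R_0$, and the induction hypothesis gives surjectivity of $\sigma_{\M_0}$.

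Fix $\widehat{\sigma}\in\End_{R\Ar}(\M)$; the task is to find $r\in R$ with $\widehat{\sigma}-\sigma_\M(r)$ factoring through a projective $R\Ar$-module. First I apply the induction hypothesis to the reduction $\widehat{\sigma}_0\in\End_{R_0\Ar}(\M_0)$ to obtain $r_0\in R_0$ with $\widehat{\sigma}_0-\sigma_{\M_0}(r_0)$ factoring through a projective $R_0\Ar$-module. Lifting $r_0$ to $r_1\in R$ via $\theta$ and invoking Lemma \ref{lemma3.8}, I can produce $\widehat{\tau}\in\End_{R\Ar}(\M)$ factoring through a projective $R\Ar$-module whose reduction mod $\theta$ is $\widehat{\sigma}_0-\sigma_{\M_0}(r_0)$. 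The residual term $\widehat{\rho}:=\widehat{\sigma}-\sigma_\M(r_1)-\widehat{\tau}\in\End_{R\Ar}(\M)$ vanishes after reduction mod $\theta$, so since $\M$ is $R$-free I can write $\widehat{\rho}=t\widehat{h}$ for some (a priori only $R$-linear) $\widehat{h}\in\End_R(\M)$.

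The key step is to analyze this $\widehat{h}$. Because $R$ is local with $t\m_R=0$ and $t\ne 0$, a short argument gives $\mathrm{Ann}_R(t)=\m_R$; combined with the $R$-freeness of $\M$, the $R\Ar$-linearity of $\widehat{\rho}=t\widehat{h}$ forces each commutator $\widehat{h}\circ a-a\circ\widehat{h}$ to lie in $\m_R\,\End_R(\M)$ for every $a\in R\Ar$, so the reduction $\overline{\widehat{h}}\in\End_\k(\V)$ is in fact $\Ar$-linear. Using the hypothesis $\SEnd_{\Ar}(\V)=\k$, I will decompose $\overline{\widehat{h}}=c\cdot\mathrm{id}_\V+\widehat{s}_0$ with $c\in\k$ and $\widehat{s}_0$ factoring through a projective $\Ar$-module, and then apply Lemma \ref{lemma3.8} to the surjection $R\to\k$ to lift $\widehat{s}_0$ to some $\widehat{s}\in\End_{R\Ar}(\M)$ factoring through a projective $R\Ar$-module. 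Since $\widehat{h}-\iota_R(c)\cdot\mathrm{id}_\M-\widehat{s}\in\m_R\,\End_R(\M)$ and $t\m_R=0$, multiplication by $t$ collapses the error, yielding $\widehat{\rho}=\sigma_\M(t\iota_R(c))+t\widehat{s}$ with $t\widehat{s}$ factoring through the same projective. Combining, $\widehat{\sigma}-\sigma_\M(r_1+t\iota_R(c))=\widehat{\tau}+t\widehat{s}$ factors through a projective $R\Ar$-module, which completes the induction.

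The main obstacle I expect to navigate is precisely the propagation from $R\Ar$-linearity of $\widehat{\rho}=t\widehat{h}$ to $\Ar$-linearity of $\overline{\widehat{h}}$: all of the lifting machinery from Lemma \ref{lemma3.6} and Lemma \ref{lemma3.8} is useless unless $\overline{\widehat{h}}$ lies in $\End_{\Ar}(\V)$, so that the stable-triviality hypothesis $\SEnd_{\Ar}(\V)=\k$ can be applied. This is the one place where the Artinian-local structure (via $\mathrm{Ann}_R(t)=\m_R$) and the $R$-freeness of $\M$ genuinely enter; everything else is careful bookkeeping of lifts supplied by the earlier lemmas.
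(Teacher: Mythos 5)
Your proposal is correct and follows essentially the same route as the paper's proof: induction along a small extension $\theta:R\to R_0$, lifting the stable triviality of $\widehat{\sigma}_0$ via Lemma \ref{lemma3.8}, observing that the residual endomorphism lands in $t\M$, and using $\SEnd_{\Ar}(\V)=\k$ to absorb it into a scalar plus a projectively-factoring term. Your commutator/annihilator argument showing $\overline{\widehat{h}}\in\End_{\Ar}(\V)$ is just an unpacked version of the paper's canonical identification $\Hom_{R\Ar}(\M,t\M)\cong\End_{\Ar}(\V)$ coming from $t\M\cong\k\otimes_R\M$, so the two proofs coincide in substance.
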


\begin{proof}
Let $R_0$ be an Artinian object in $\widehat{\Ca}$ such that  $\theta: R\to R_0$ is a small extension as in Definition \ref{defi3.7} and that for all lifts $(\M_0,\widehat{\phi}_0)$ of $\V$ over $R_0$, the ring homomorphism $\sigma_{\M_0}: R_0\to \SEnd_{R_0\Ar}(\M_0)$ coming from the action of $R_0$ on $\M_0$ via scalar multiplication is surjective. Let $\widehat{f}\in \SEnd_{R\Ar}(\M)$. Then $\theta$ induces $\widehat{f}_0\in \SEnd_{R_0\Ar}(\M_0)$, where $\M_0=R_0\otimes_{R,\theta}\M$. By assumption, there exists $r_0\in R_0$ such that $\widehat{f}_0=\widehat{\mu}_{r_0}$, where $\widehat{\mu}_{r_0}$ denotes multiplication by $r_0$. This implies that there exists $\widehat{g}_0:\M_0\to \M_0$ that factors through a projective $R_0\Ar$-module such that $\widehat{f}_0-\widehat{\mu}_{r_0}=\widehat{g}_0$. By Lemma \ref{lemma3.8}, there exists $\widehat{g}: \M\to \M$ that factors through a projective $R\Ar$-module such that $\mathrm{id}_{R_0}\otimes \widehat{g}=\widehat{g}_0$. Let $r\in R$ such that $\theta(r)=r_0$, and let $\widehat{\mu}_r:\M\to \M$ be the scalar multiplication morphism by $r$.  Thus $\mathrm{id}_{R_0}\otimes \widehat{\mu}_r=\widehat{\mu}_{r_0}$. Let $\widehat{\beta}$ be the endomorphism of $\M$ given by  $\widehat{f}-\widehat{\mu}_r-\widehat{g}$. It follows that $\mathrm{id}_{R_0}\otimes \widehat{\beta}=0$. Since $\theta$ is a small extension, we have that there is $t\in R$ such that $\ker \theta = t R$. This implies that $\mathrm{im}\, \widehat{\beta}\subseteq t\M$ and thus $\widehat{\beta} \in \Hom_{R\Ar}(\M, t\M)$. Since $tR\cong \k$, it follows that $t\M\cong \k\otimes_R\M$ and there is an isomorphism $\Hom_{R\Ar}(\M,t\M)\cong\Hom_{\Ar}(\k\otimes_R\M,\k\otimes_R\M)\cong\End_{R\Ar}(\V)$. Let $\widehat{\beta}_0$ be the image of $\widehat{\beta}$ under this isomorphism. Then there exist $\lambda_0\in \k$ and $\widehat{\alpha}_0:\V\to\V$ that factors through a finite dimensional projective $\Ar$-module such that $\widehat{\beta}_0=\widehat{\mu}_{\lambda_0}+\widehat{\alpha}_0$, where $\widehat{\mu}_{\lambda_0}:\V\longrightarrow\V$ is multiplication by $\lambda_0$. Once again, by Lemma \ref{lemma3.8}, there exists $\widehat{\alpha}:\M\to\M$ such that $\widehat{\alpha}$ factors through a finitely generated projective $R\Ar$-module such that $\mathrm{id}_\k\otimes\widehat{\alpha}=\widehat{\alpha}_0$. Thus $\widehat{\beta}=\widehat{\mu}_{t\lambda}+\widehat{\alpha}$, where $\lambda\in R$ is such that $t\lambda$ gives $\lambda_0$ under the isomorphism $tR\cong \k$, and $\mu_{t\lambda}: \M\to t\M$ is multiplication by $t\lambda$. This implies that  $\widehat{f}= \widehat{\mu}_{t\lambda+r}+(\widehat{g}+\widehat{\alpha})$ in $\End_{R\Ar}(\M)$, and since $\widehat{\alpha}+\widehat{g}$ factors through a projective $R\Ar$-module, we get that $\widehat{f}=\widehat{\mu}_{t\lambda+r}$ in $\SEnd_{R\Ar}(\M)$.  Then $\widehat{f}=\widehat{\sigma}_{\M}(t\lambda+r)$ with $t\lambda+r\in R$ and thus $\sigma_{\M}$ is surjective. 
\end{proof}

The proof of the following result uses the same arguments in that of \cite[Claim 6]{blehervelez} and once again together with the fact that the projective $\Ar$-modules are also injective. 
\begin{lemma}\label{lemma3.11}
Let $R$ be an Artinian ring in $\widehat{\Ca}$. Suppose $\P$ is a finite dimensional projective $\Ar$-module and that there exists a commutative diagram of finite dimensional  $R\Ar$-modules 
\begin{equation}\label{diag1}
\begindc{\commdiag}[330]
\obj(0,1)[p0]{$0$}
\obj(2,1)[p1]{$\P_R$}
\obj(4,1)[p2]{$\widehat{T}$}
\obj(6,1)[p3]{$\widehat{C}$}
\obj(8,1)[p4]{$0$}
\obj(0,-1)[q0]{$0$}
\obj(2,-1)[q1]{$\P$}
\obj(4,-1)[q2]{$\k\otimes_R\widehat{T}$}
\obj(6,-1)[q3]{$\k\otimes_R\widehat{C}$}
\obj(8,-1)[q4]{$0$}
\mor{p0}{p1}{}
\mor{p1}{p2}{$\widehat{\alpha}_R$}
\mor{p2}{p3}{$\widehat{\beta}_R$}
\mor{p3}{p4}{}
\mor{q0}{q1}{}
\mor{q1}{q2}{$\widehat{\alpha}$}
\mor{q2}{q3}{$\widehat{\beta}$}
\mor{q3}{q4}{}
\mor{p1}{q1}{}
\mor{p2}{q2}{}
\mor{p3}{q3}{}
\enddc
\end{equation}
in which $\P_R$ is as in Lemma \ref{lemma3.6} (i), $\widehat{T}$ as well as  $\widehat{C}$ are free over $R$, and the bottom row arises by tensoring the top row with $\k$ over $R$ and by identifying $\P$ with $\k\otimes_R\P_R$. Then the top row of (\ref{diag1}) splits as a sequence of $R\Ar$-modules.
\end{lemma}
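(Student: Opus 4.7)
The plan is to split the bottom row of (\ref{diag1}) first (which is automatic because $\P$ is a projective $\Ar$-module), then to lift the resulting retraction to the top row by invoking Lemma \ref{lemma3.7}, and finally to correct the lifted morphism via Nakayama's Lemma to obtain a genuine retraction of $\widehat{\alpha}_R$.

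To implement this, I would start by choosing a retraction $\widehat{\gamma}:\k\otimes_R\widehat{T}\to \P$ of $\widehat{\alpha}$; such a retraction exists because $\P$ is projective in $\Ar$-mod. Next, I would apply Lemma \ref{lemma3.7} to the canonical surjection $\theta:R\to \k$, setting $\M=\widehat{T}$, $\Q=\P_R$, $\M_0=\k\otimes_R\widehat{T}$ and $\Q_0=\P$, with $\widehat{g}$ the identity and $\widehat{h}:\k\otimes_R\P_R\to \P$ the natural isomorphism provided by Lemma \ref{lemma3.6} (i). The hypotheses of Lemma \ref{lemma3.7} are met because $\widehat{T}$ is free of finite rank over $R$ (as given in the statement), $\P_R$ is a projective $R\Ar$-module by Lemma \ref{lemma3.6} (i), and $\P$ is a projective $\Ar$-module by assumption. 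This yields a morphism $\widehat{\gamma}_R\in\Hom_{R\Ar}(\widehat{T},\P_R)$ whose reduction modulo $\m_R$ recovers $\widehat{\gamma}$ through the identifications supplied by $\widehat{g}$ and $\widehat{h}$.

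The final step is to analyze the composition $\widehat{\delta}_R:=\widehat{\gamma}_R\circ\widehat{\alpha}_R\in\End_{R\Ar}(\P_R)$. Reducing modulo $\m_R$ gives $\widehat{\gamma}\circ\widehat{\alpha}=\mathrm{id}_{\P}$, so the cokernel of $\widehat{\delta}_R$ has trivial reduction; Nakayama's Lemma, applied to the finitely generated $R$-module $\P_R$ over the local ring $R$, then forces $\widehat{\delta}_R$ to be surjective. Because $R$ is Artinian and $\P_R$ is free of finite rank over $R$, the module $\P_R$ has finite length as an $R$-module, so any surjective $R$-linear endomorphism of it is automatically bijective; in particular $\widehat{\delta}_R$ is an automorphism. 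Setting $\widehat{r}_R:=\widehat{\delta}_R^{-1}\circ\widehat{\gamma}_R$ produces an $R\Ar$-linear retraction of $\widehat{\alpha}_R$, which shows that the top row of (\ref{diag1}) splits.

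I expect the only delicate point to be the verification that the input data fit Lemma \ref{lemma3.7}, specifically checking that $\P_R$ is projective on the $R\Ar$-side (which is exactly the content of Lemma \ref{lemma3.6} (i)) and that the tensor identifications are compatible with the chosen isomorphisms $\widehat{g}$ and $\widehat{h}$. Once the lifted morphism $\widehat{\gamma}_R$ is in hand, the rest is a standard Nakayama-plus-finite-length argument and should cause no trouble.
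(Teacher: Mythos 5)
Your argument is correct and is essentially the proof the paper intends (it defers to Claim~6 of \cite{blehervelez}): split the bottom row, lift the retraction $\widehat{\gamma}$ to $\widehat{\gamma}_R\in\Hom_{R\Ar}(\widehat{T},\P_R)$ via Lemma~\ref{lemma3.7} applied to $R\to\k$, and use Nakayama plus finite length over the Artinian ring $R$ to see that $\widehat{\gamma}_R\circ\widehat{\alpha}_R$ is an automorphism of $\P_R$, so that $(\widehat{\gamma}_R\circ\widehat{\alpha}_R)^{-1}\circ\widehat{\gamma}_R$ retracts $\widehat{\alpha}_R$. One small correction: the bottom row splits not because $\P$ is projective per se (projectivity splits epimorphisms onto $\P$, not monomorphisms out of it), but because $\P$ is \emph{injective} in $\Ar$-mod, and injectivity follows from projectivity only because $\Ar$-mod is a Frobenius category --- exactly the extra fact the paper flags as being needed on top of the argument of \cite{blehervelez}.
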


In the proof of the following result, we adjust the ideas in the proof of \cite[Claim 1, pg. 105]{blehervelez2} to our context and use the description of lifts of finite dimensional $\Ar$-modules over objects in $\widehat{\Ca}$ as in Remark \ref{rem3.2} (i). 

\begin{lemma}\label{lemma3.12}
Schlessinger's criterion \textup{(H$_1$)} is satisfied by $\Fun_{\Ar, \V}$.
\end{lemma}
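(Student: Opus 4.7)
The plan is to verify $(H_1)$ by constructing, for every pair of compatible deformations, an explicit preimage as a fiber product of $R\Ar$-modules. Fix a pullback diagram as in (\ref{pullbackart}) with $\theta'' \colon R'' \to R$ a small extension as in Definition \ref{defi3.7}, and fix deformations $[\M', \widehat{\phi}'] \in \Fun_{\Ar, \V}(R')$ and $[\M'', \widehat{\phi}''] \in \Fun_{\Ar, \V}(R'')$ whose images in $\Fun_{\Ar, \V}(R)$ agree. By Definition \ref{def3.1}(ii), this agreement provides an isomorphism of $R\Ar$-modules $\widehat{\tau} \colon R \otimes_{R', \theta'} \M' \xrightarrow{\sim} R \otimes_{R'', \theta''} \M''$ compatible with $\widehat{\phi}'$ and $\widehat{\phi}''$ after reduction. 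Define
\[
\M''' := \bigl\{\, (x', x'') \in \M' \oplus \M'' : \widehat{\tau}(1 \otimes x') = 1 \otimes x'' \,\bigr\}.
\]
Since $R''' = R' \times_R R''$ embeds diagonally in $R' \times R''$ and the ring $R''' \otimes_\k \Ar$ therefore embeds in $R'\Ar \times R''\Ar$, the submodule $\M'''$ inherits a natural $R'''\Ar$-module structure.

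Next I would verify that $\M'''$ is free of finite rank over $R'''$ and restricts correctly along the projections $\pi'$ and $\pi''$. Starting from an $R''$-basis $\{m_1'', \dots, m_n''\}$ of $\M''$, I reduce modulo $\ker \theta''$ and transport along $\widehat{\tau}^{-1}$ to a basis of $R \otimes_{R', \theta'} \M'$. Because $\ker \theta' \subseteq \m_{R'}$ for any morphism of local rings in $\widehat{\Ca}$, Nakayama's lemma combined with the freeness of $\M'$ over $R'$ lifts this to an $R'$-basis $\{m_1', \dots, m_n'\}$ of $\M'$. The pairs $(m_i', m_i'')$ lie in $\M'''$ by construction. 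For any $(x', x'') \in \M'''$ with expansions $x' = \sum r_i' m_i'$ and $x'' = \sum r_i'' m_i''$, the defining compatibility forces $\theta'(r_i') = \theta''(r_i'')$, so $(r_i', r_i'') \in R'''$, and $\{(m_i', m_i'')\}_{i=1}^n$ is an $R'''$-basis of $\M'''$. The canonical projection $\M''' \to \M'$ is $R'''\Ar$-linear and becomes an isomorphism after applying $\k \otimes_{R'''} -$; composing with $\widehat{\phi}'$ defines a residue isomorphism $\widehat{\phi}''' \colon \k \otimes_{R'''} \M''' \xrightarrow{\sim} \V$. A parallel argument using the projection to $\M''$ shows that $\Theta([\M''', \widehat{\phi}'''])$ equals the prescribed pair in $\Fun_{\Ar, \V}(R') \times_{\Fun_{\Ar, \V}(R)} \Fun_{\Ar, \V}(R'')$, which proves surjectivity.

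The main obstacle is the bookkeeping required to identify the $\Ar$-module structure on the fiber product. Under the description of Remark \ref{rem3.2}(i), $\M'$, $\M''$, and $\M'''$ are really sequences of free $R'$-, $R''$-, and $R'''$-modules $M_i'$, $M_i''$, $M_i'''$ equipped with structure maps of the form $D_{R'}(R'\A) \otimes_{R'\A} M_i' \to M_{i+1}'$, and analogously on the $R''$- and $R'''$-sides. One must check that the pullback is formed term by term, so that $M_i''' = M_i' \times_{R \otimes_{R'', \theta''} M_i''} M_i''$, and that applying $D_{R'''}(R'''\A) \otimes_{R'''\A} -$ to this term-wise fiber product commutes with the analogous fiber product on the $R'$- and $R''$-sides. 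Both reduce to the flatness of the $M_i$ over the respective base rings together with the identity $D_{R'''}(R'''\A) \otimes_{R'''} R' \cong D_{R'}(R'\A)$ (and analogously for $R''$), so the structure maps $g_i'$ and $g_i''$ glue into a structure map $g_i'''$ for $\M'''$ whose reductions along $\pi'$ and $\pi''$ recover $g_i'$ and $g_i''$. Once this bookkeeping is dispatched, the construction above produces the required preimage and completes the verification of $(H_1)$.
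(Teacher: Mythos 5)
Your proof is correct and follows essentially the same route as the paper: both construct the preimage as the fiber product $\M'''=\M'\times_{R\otimes_{R',\theta'}\M'}\M''$ (the paper builds it componentwise with $M_i'''=M_i'\times_{R\otimes_{R',\theta'}M_i'}M_i''$ and $g_i'''=(g_i',g_i'')$) and then verify freeness over $R'''$ and the correct reductions along $\pi'$ and $\pi''$. The only difference is that the paper cites Schlessinger's Lemma 3.4 for freeness of the fiber product, whereas you prove it directly by lifting a basis via Nakayama; both are fine.
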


\begin{proof}
Assume that $\theta''$ in (\ref{pullbackart}) is a small extension as in Definition \ref{defi3.7}. Let $(\M',\widehat{\phi}')$ and $(\M'', \widehat{\phi}'')$ be lifts of $\V$ over $R'$ and $R''$, respectively, and such that there exists an isomorphism of $R\A$-modules 
\begin{equation*}
\tau_R: R\otimes_{R',\theta'}\M'\to R\otimes_{R'',\theta''}\M''
\end{equation*}
that satisfies $\widehat{\phi}'_{\theta'}=\widehat{\phi}''_{\theta''}\circ (\mathrm{id}_\k\otimes \tau_R)$. Assume that $\M'=(M'_i, g'_i)_{i\in \Z}$ and $\M''= (M''_i,g''_i)_{i\in \Z}$. Define $\M''' = (M'''_i, g'''_i)_{i\in \Z}$ as follows. For all  $i\in \Z$, let $M'''_i= M'_i\times_{R\otimes_{R',\theta'}M'_i}M''_i$ and $g'''_i = (g'_i, g''_i)$. Since $\M'$ and $\M''$ are finitely generated as an $R'\Ar$-module and as an $R''\Ar$-module, respectively,  it follows that $\M'''$ is as finitely generated as an $R'''\Ar$-module. Since $\theta''$ is surjective, and $\M'$  and $\M''$ are free over $R'$ and $R''$, respectively, it follows by \cite[Lemma 3.4]{sch} that for each $i\in \Z$, $M'''_i$ is free over $R'''$, and thus so is $\M'''$. Moreover,  we also have that $R\otimes_{R'''}\M'''\cong R\otimes_{R',\theta'}\M'$ and thus we can define $\widehat{\phi}''':\k\otimes_{R'''}\M'''\to \V$ as the composition
\begin{equation*}
\k\otimes_{R'''}\M'''\cong \k\otimes_{R'}\M'\xrightarrow{\widehat{\phi}'}\V.
\end{equation*} 
Therefore, $(\M''', \widehat{\phi}''')$ defines a lift of $\V$ over $R'''$ such that 
\begin{align*}
\Fun_{\A,\V}(\pi')([\M''', \widehat{\phi}''']) = [\M',\widehat{\phi}'] \text{ and } \Fun_{\A,\V}(\pi'')([\M''', \widehat{\phi}''']) = [\M'',\widehat{\phi}''].
\end{align*}
Thus $\Theta$  in (\ref{thetapullback}) is surjective, which proves that (H$_1$) is satisfied by $\Fun_{\Ar, \V}$.  
\end{proof}

The proof of the following result is obtained by adapting the arguments in the proof of \cite[Claim 5]{blehervelez} to our situation and by replacing Claim 2, Claim 3 and Claim 4 in \cite{blehervelez} by Lemmata \ref{lemma3.8}, \ref{lemma3.9} and \ref{lemma3.10}, respectively.

\begin{lemma}\label{lemma3.13}
Consider the pullback diagram of Artinian rings in $\widehat{\Ca}$ and let $\Theta$ be as in (\ref{thetapullback}). 
\begin{enumerate}
\item If $R=\k$, then $\Theta$ is injective. In particular, Schlessinger's criterion \textup{(H$_2$)} is always satisfied by $\Fun_{\Ar,\V}$.  
\item If $\SEnd_{\Ar}(\V)=\k$, then $\Theta$ is injective. In particular, Schlessinger's criterion \textup{(H$_4$)} is satisfied by $\Fun_{\Ar,\V}$ in this situation. 
\end{enumerate}
\end{lemma}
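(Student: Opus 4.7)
The plan is to reduce injectivity of $\Theta$ to a gluing problem: given two lifts over $R'''$ mapping to the same element, produce compatible isomorphisms of lifts over $R'$ and $R''$ whose reductions to $R$ coincide, then invoke the pullback property of $R''' = R' \times_R R''$ to obtain an $R'''\Ar$-isomorphism. The case distinction (i) versus (ii) only dictates how one forces the two $R$-reductions to agree.

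Pick lifts $(\M'''_1, \widehat{\phi}'''_1)$ and $(\M'''_2, \widehat{\phi}'''_2)$ with $\Theta([\M'''_1, \widehat{\phi}'''_1]) = \Theta([\M'''_2, \widehat{\phi}'''_2])$, and choose isomorphisms of lifts $\widehat{\tau}': R' \otimes_{R'''} \M'''_1 \to R' \otimes_{R'''} \M'''_2$ and $\widehat{\tau}'': R'' \otimes_{R'''} \M'''_1 \to R'' \otimes_{R'''} \M'''_2$. Their reductions $\widehat{\tau}'_R$ and $\widehat{\tau}''_R$ to $R$ are both isomorphisms between $R \otimes_{R'''} \M'''_1$ and $R \otimes_{R'''} \M'''_2$, so $\widehat{\delta} := (\widehat{\tau}''_R)^{-1} \circ \widehat{\tau}'_R$ is an $R\Ar$-automorphism of $R \otimes_{R'''} \M'''_1$ whose reduction under $\mathrm{id}_\k \otimes (-)$ equals $\mathrm{id}_\V$. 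For (i), since $R = \k$ already equals its reduction, $\widehat{\delta} = \mathrm{id}_\V$ directly, so $\widehat{\tau}'_R = \widehat{\tau}''_R$.

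For (ii), Lemma \ref{lemma3.10} applied to the lift $R \otimes_{R'''} \M'''_1$ yields an $r \in R$ with $\widehat{\delta} = \widehat{\mu}_r + \widehat{p}$ in $\End_{R\Ar}(R \otimes_{R'''} \M'''_1)$, where $\widehat{p}$ factors through a projective $R\Ar$-module; comparing residues in $\SEnd_{\Ar}(\V) = \k$ shows $r \equiv 1 \pmod{\m_R}$. Lift $r$ to any $r'' \in R''$ via the surjection $\theta''$ and apply Lemma \ref{lemma3.8} (with $\theta''$ in place of $\theta$) to lift $\widehat{p}$ to $\widehat{p}'' \in \End_{R''\Ar}(R'' \otimes_{R'''} \M'''_1)$ factoring through a projective $R''\Ar$-module. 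Set $\widehat{\gamma}'' := \widehat{\mu}_{r''} + \widehat{p}''$; its $\theta''$-reduction equals $\widehat{\delta}$ and its further reduction to $\k$ equals $\mathrm{id}_\V$, so Nakayama's lemma applied to the finite-rank free $R''$-module $R'' \otimes_{R'''} \M'''_1$ ensures $\widehat{\gamma}''$ is an automorphism, and the relation $\mathrm{id}_\k \otimes \widehat{\gamma}'' = \mathrm{id}_\V$ shows it is in fact an automorphism of the $R''$-lift. Replacing $\widehat{\tau}''$ by $\widehat{\tau}'' \circ \widehat{\gamma}''$ yields a new isomorphism of $R''$-lifts whose $R$-reduction becomes $\widehat{\tau}''_R \circ \widehat{\delta} = \widehat{\tau}'_R$.

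With matched $R$-reductions in hand the gluing is standard: since each $\M'''_i$ is free over $R''' = R' \times_R R''$, the identification $\M'''_i \cong (R' \otimes_{R'''} \M'''_i) \times_{R \otimes_{R'''} \M'''_i} (R'' \otimes_{R'''} \M'''_i)$ holds component-wise in the representation (\ref{tensorcomplxM}) and respects the $\Ar$-action. Hence the pair $(\widehat{\tau}', \widehat{\tau}'' \circ \widehat{\gamma}'')$ glues to a unique $R'''\Ar$-isomorphism $\widehat{\tau}''': \M'''_1 \to \M'''_2$ of lifts, so $\Theta$ is injective. Combining this with the surjectivity furnished by Lemma \ref{lemma3.12} proves (H$_2$) from (i) (via the small extension $\k[\epsilon] \to \k$) and (H$_4$) from (ii). The central obstacle is case (ii): upgrading the purely stable-category description of $\widehat{\delta}$ from Lemma \ref{lemma3.10} into a genuine $R''$-level lift $\widehat{\gamma}''$, which requires coordinating the scalar lift of $r$ with the projective-factoring lift of $\widehat{p}$ from Lemma \ref{lemma3.8}, and then checking via Nakayama that the sum is truly an automorphism of the lift and not merely an endomorphism.
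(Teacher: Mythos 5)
Your argument is correct and is essentially the proof the paper intends: it adapts \cite[Claim 5]{blehervelez}, using Lemma \ref{lemma3.10} to express the discrepancy automorphism $\widehat{\delta}$ over $R$ as a scalar plus a map factoring through a projective, Lemma \ref{lemma3.8} to lift that projective part along the surjection $\theta''$, and the fiber-product decomposition of free $R'''$-modules to glue. The only cosmetic difference is that you bypass Lemma \ref{lemma3.9} by arranging directly that all reductions to $\k$ are the identity; note also that your gluing and lifting steps use surjectivity of one leg of the pullback, which holds in the cases (H$_2$) and (H$_4$) that the lemma is invoked for.
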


Although we know already that $\Fun_{\Ar,\V}$ satisfies Schlessinger's criterion \textup{(H$_3$)}, we still need the following result concerning the tangent space of $\Fun_{\Ar,\V}$, which can be proved by using the same ideas as in \cite[Prop. 2.1]{blehervelez}. However, we decided to include a proof for the convenience of the reader.

\begin{lemma}\label{lemma3.14}
There exists an isomorphism of $\k$-vector spaces $t_{\V}\cong \Ext_{\Ar}^1(\V,\V)$. 
\end{lemma}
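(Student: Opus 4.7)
The plan is to follow the standard correspondence between first-order deformations and first Ext groups. Given a lift $(\M, \widehat{\phi})$ of $\V$ over $\k[\epsilon]$, the freeness of $\M$ over $\k[\epsilon]$ guarantees that multiplication by $\epsilon$ fits into a short exact sequence of $\Ar$-modules
\[
0\to \V\xrightarrow{\widehat{\iota}_{\M}}\M\xrightarrow{\widehat{\pi}_{\M}}\V\to 0,
\]
where $\widehat{\iota}_{\M}$ is induced by $\widehat{\phi}^{-1}$ together with the identification $\k\otimes_{\k[\epsilon]}\M\cong \epsilon\M$, and $\widehat{\pi}_{\M}$ is the projection $\M\surjection \M/\epsilon\M$ followed by $\widehat{\phi}$. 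I would define $\Psi:t_{\V}\to \Ext^1_{\Ar}(\V,\V)$ by $[\M,\widehat{\phi}]\mapsto[\widehat{\iota}_{\M}, \widehat{\pi}_{\M}]$, and verify well-definedness: any isomorphism of lifts $\widehat{\alpha}:\M\to\M'$ in the sense of Definition \ref{def3.1} (ii) is automatically $\k[\epsilon]$-linear, hence compatible with $\epsilon$-multiplication, and descends to the identity on the outer copies of $\V$, inducing an equivalence of extensions.

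For the inverse map, given a representative $0\to \V\xrightarrow{\widehat{\iota}} \X\xrightarrow{\widehat{\pi}}\V\to 0$ of a class in $\Ext^1_{\Ar}(\V,\V)$, I would endow $\X$ with a $\k[\epsilon]\Ar$-module structure by letting $\epsilon$ act as $\widehat{\iota}\circ\widehat{\pi}$. Since $\widehat{\iota}$ and $\widehat{\pi}$ are $\Ar$-linear, this scalar action commutes with the $\Ar$-action, and $(\widehat{\iota}\circ\widehat{\pi})^2=\widehat{\iota}\circ(\widehat{\pi}\circ\widehat{\iota})\circ\widehat{\pi}=0$, so the action factors through $\k[\epsilon]$. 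A $\k$-dimension count, using $\ker(\widehat{\iota}\circ\widehat{\pi})=\mathrm{im}(\widehat{\iota})=\mathrm{im}(\widehat{\iota}\circ\widehat{\pi})$, shows that $\X$ is free over $\k[\epsilon]$, and $\widehat{\pi}$ induces an isomorphism $\k\otimes_{\k[\epsilon]}\X\cong \V$, producing a lift. A straightforward verification shows that these two constructions are mutually inverse on equivalence classes.

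Finally, I would verify that $\Psi$ is $\k$-linear. Scalar multiplication by $\lambda\in \k$ on $t_{\V}$ is induced, following Schlessinger \cite{sch}, by the morphism $\k[\epsilon]\to \k[\epsilon]$, $\epsilon\mapsto\lambda\epsilon$, which under $\Psi$ sends an extension to its $\lambda$-rescaling via the universal property of pushout along $\lambda\cdot\mathrm{id}_{\V}:\V\to \V$, matching the usual $\k$-action on $\Ext^1_{\Ar}(\V,\V)$. Addition on $t_{\V}$ uses the pullback square $\k[\epsilon]\times_{\k}\k[\epsilon]$ together with the multiplication map, and since Schlessinger's criterion \textup{(H$_2$)} has been verified in Lemma \ref{lemma3.13}(i), the resulting sum corresponds precisely to the Baer sum of extensions.

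The main obstacle will be the careful verification of the $\k$-vector space compatibility, which requires a patient unwinding of Schlessinger's abstract construction of the tangent space structure and its translation into operations on extensions. The construction of the bijection itself is conceptually standard, but it must take into account that $\Ar$ is an infinite-dimensional $\k$-algebra without identity; however, since any finite-dimensional $\Ar$-module is supported on finitely many primitive idempotents, and $\Ar$-mod has enough projectives by Lemma \ref{lemma3.6} (i) and Remark \ref{rem1.1}(ii), the group $\Ext^1_{\Ar}(\V,\V)$ is well-defined and behaves as in the classical finite-dimensional setting.
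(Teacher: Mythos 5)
Your proposal is correct and follows essentially the same route as the paper: the lift-to-extension map via $\epsilon\M\cong\V$ and $\M/\epsilon\M\cong\V$, and the inverse via letting $\epsilon$ act as $\widehat{\iota}\circ\widehat{\pi}$ on the middle term of an extension, exactly as in the paper's adaptation of Mazur's and Bleher--V\'elez's argument. Your additional checks (well-definedness on isomorphism classes, freeness over $\k[\epsilon]$ via $\ker(\widehat{\iota}\circ\widehat{\pi})=\mathrm{im}(\widehat{\iota}\circ\widehat{\pi})$, and compatibility of the $\k$-vector space structures) are details the paper leaves implicit, and they are all sound.
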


\begin{proof}
As in the proof of \cite[Prop. 2.1]{blehervelez}, the isomorphism $t_{\V}\cong \Ext_{\Ar}^1(\V,\V)$ is established in a similar manner to that in \cite[\S 22]{mazur}. Namely, for a given lift $(\M, \widehat{\phi})$ of $\V$ over the ring of dual numbers $\k[\epsilon]$, we have $\Ar$-module isomorphisms $\epsilon \M\cong\V$ and $\M/\epsilon \M\cong \V$. Therefore, we obtain a short exact sequence of $\Ar$-modules $\mathcal{E}_{\M}: 0\to \V\to \M\to \V\to 0$.
Thus we have a well defined $\k$-linear map $s:t_{\V}\to \Ext_{\Ar}^1(\V,\V)$ defined as $s([\M,\widehat{\phi}])= \mathcal{E}_{\M}$. Now, if $\mathcal{E}: 0\to \V\xrightarrow{\widehat{\alpha}_1}\V_1\xrightarrow{\widehat{\alpha}_2}\V\to 0$
is an extension of $\Ar$-modules, then $\V_1$ is naturally a $\k[\epsilon]\Ar$-module by letting $\epsilon\cdot v_1 = (\widehat{\alpha_1}\circ \widehat{\alpha}_2)(v_1)$ for all $v_1\in \V_1$.   In this way, $\V_1$ is free over $\k[\epsilon]$ and there is a $\Ar$-module isomorphism $\widehat{\psi}: \V_1/\epsilon \V_1\to \V$. Therefore, $(\V_1,\widehat{\psi})$ defines a lift of $\V$ over $\k[\epsilon]$. Thus the $\k$-linear map $s': \Ext_{\Ar}^1(\V,\V)\to t_{\V}$ defined as $s'(\mathcal{E})= [\V_1, \widehat{\psi}]$ gives an inverse of $s$, which implies that $t_{\V}\cong \Ext_{\Ar}^1(\V,\V)$ as $\k$-vector spaces. 
\end{proof}


Note that by \cite[Prop. 2.1]{blehervelez}, $t_V\cong \Ext_{\A}^1(V,V)$ for all finitely generated $\A$-modules $V$, in particular for when $V=\A$. However,  Lemma \ref{lemma3.14} cannot be applied when $\V= \Ar$, for $\Ar$ has infinite $\k$-dimension.

We next prove the continuity of the deformation functor $\widehat{\Fun}_{\Ar,\V}$ in the next Lemma \ref{lemma3.15}. As with the proof of Lemma \ref{lemma3.14}, the proof of Lemma \ref{lemma3.15} can be obtained by adapting the arguments in the proofs of \cite[Prop. 2.1]{blehervelez} and \cite[Prop. 2.4.4]{blehervelez2} to our context (see also \cite[\S 20, Prop. 1]{mazur}). We decided to include a proof for the convenience of the reader. 

\begin{lemma}\label{lemma3.15}
The functor $\widehat{\Fun}_{\A,\V}:\widehat{\Ca}\to \Sets$ is continuous.
\end{lemma}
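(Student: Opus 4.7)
The plan is to show that the natural map
\begin{equation*}
\Phi_R: \widehat{\Fun}_{\Ar,\V}(R) \longrightarrow \invlim_i \widehat{\Fun}_{\Ar,\V}(R/\m_R^i)
\end{equation*}
induced by the quotient morphisms $\pi_i: R\to R/\m_R^i$ is bijective for every object $R$ in $\widehat{\Ca}$. By Remark \ref{rem3.3} (ii), I may write $\V$ as in (\ref{tensorcomplx2}) so that only the components $V_n,\dots,V_{n+m}$ are nonzero, and by Remark \ref{rem3.2} (i) any lift $(\M,\widehat{\phi})$ of $\V$ over $R$ has the form $\M=(M_j,g_j)_{j\in\Z}$ where $M_j$ is a finitely generated $R\A$-module that is free over $R$ of rank $\dim_\k V_j$ (in particular $M_j=0$ for $j\notin\{n,\dots,n+m\}$). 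Because each $M_j$ is finitely generated and free over $R$, the canonical map $M_j\to \invlim_i M_j/\m_R^iM_j$ is an isomorphism, and consequently $\M\cong \invlim_i (R/\m_R^i)\otimes_R \M$ as $R\Ar$-modules.

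For the injectivity of $\Phi_R$, suppose $(\M,\widehat{\phi})$ and $(\M',\widehat{\phi}')$ are lifts of $\V$ over $R$ whose images in $\widehat{\Fun}_{\Ar,\V}(R/\m_R^i)$ agree for every $i$. I would construct compatible isomorphisms $\widehat{\alpha}_i: (R/\m_R^i)\otimes_R\M \to (R/\m_R^i)\otimes_R\M'$ satisfying $\widehat{\phi}'_{\pi_i}\circ(\mathrm{id}_\k\otimes\widehat{\alpha}_i)=\widehat{\phi}_{\pi_i}$ inductively, lifting from level $i$ to level $i+1$ by the same kind of obstruction-free argument used in Lemma \ref{lemma3.9}. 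The inverse limit $\widehat{\alpha}=\invlim_i\widehat{\alpha}_i$ then defines an isomorphism $\M\to\M'$ in $R\Ar$-mod intertwining $\widehat{\phi}$ and $\widehat{\phi}'$, which shows $[\M,\widehat{\phi}]=[\M',\widehat{\phi}']$ in $\widehat{\Fun}_{\Ar,\V}(R)$.

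For the surjectivity of $\Phi_R$, let $\{[\M^{(i)},\widehat{\phi}^{(i)}]\}_i$ be a compatible family; choose representatives and isomorphisms $\widehat{\tau}_i:(R/\m_R^i)\otimes_{R/\m_R^{i+1}}\M^{(i+1)}\to\M^{(i)}$ realizing the compatibility (possible since each $\M^{(i+1)}\to\M^{(i)}$ is a morphism of deformations). For each $j\in\{n,\dots,n+m\}$, set $M_j:=\invlim_i M^{(i)}_j$ through the transition maps induced by the $\widehat{\tau}_i$. Since each $M^{(i)}_j$ is free over $R/\m_R^i$ of constant rank $\dim_\k V_j$, a standard Nakayama/inverse-limit argument for complete local rings yields that $M_j$ is a free $R$-module of rank $\dim_\k V_j$, hence finitely generated over $R$. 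The structural morphisms $g_j: D_R(R\A)\otimes_{R\A}M_j\to M_{j+1}$ are obtained as inverse limits of $g^{(i)}_j$; this is compatible with the tensor product because $D_R(R\A)$ is finitely generated free over $R$, so $D_R(R\A)\otimes_{R\A}(-)$ preserves the relevant inverse limits on modules of the form under consideration. The resulting $\M=(M_j,g_j)_{j\in\Z}$ is a finitely generated $R\Ar$-module that is free over $R$, and the isomorphism $\widehat{\phi}:\k\otimes_R\M\to\V$ is defined via the composition $\k\otimes_R\M\cong \k\otimes_{R/\m_R}\M^{(1)}\xrightarrow{\widehat{\phi}^{(1)}}\V$. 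One then verifies directly that $\Phi_R([\M,\widehat{\phi}])$ matches the given family.

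The main obstacle I expect is not conceptual but bookkeeping: one must verify that the freeness of each $M_j$ survives the inverse limit and that the structural morphisms $g_j$ interact correctly with both the tensor product $D_R(R\A)\otimes_{R\A}(-)$ and the inverse limits. Both issues are handled cleanly by the finiteness of ranks and the finitely generated freeness of $D_R(R\A)$ over $R$, so they reduce to standard properties of inverse limits of free modules of bounded rank over complete Noetherian local rings.
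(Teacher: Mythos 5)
Your overall strategy coincides with the paper's: both prove that the natural map $\widehat{\Fun}_{\Ar,\V}(R)\to\invlim_i\widehat{\Fun}_{\Ar,\V}(R/\m_R^i)$ is bijective, and your surjectivity argument (replace representatives so the transition maps become honest surjections of modules, then pass to the inverse limit, using freeness of bounded rank over the complete local ring $R$) is essentially the paper's, carried out componentwise rather than on $\M$ as a whole. That half is fine.

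The injectivity half, however, contains a genuine gap. The hypothesis only gives you, for each $i$, \emph{some} isomorphism $\widehat{\gamma}_i\colon R_i\otimes_R\M\to R_i\otimes_R\M'$ intertwining the $\widehat{\phi}$'s; these $\widehat{\gamma}_i$ need not be compatible with the transition maps, so $\invlim_i\widehat{\gamma}_i$ does not exist as written, and manufacturing a compatible system is precisely the content of the proof. You defer this to ``the same kind of obstruction-free argument used in Lemma~\ref{lemma3.9},'' but that lemma assumes $\SEnd_{\Ar}(\V)=\k$, whereas continuity must hold for \emph{arbitrary} finite dimensional $\V$ --- it is an input to Proposition~\ref{prop3.17}, which produces the versal (not universal) deformation ring in general. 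If your induction really leaned on Lemma~\ref{lemma3.9}, you would only obtain continuity under $\SEnd_{\Ar}(\V)=\k$, which is not enough. The correct mechanism, which the paper carries out, uses no hypothesis on $\SEnd_{\Ar}(\V)$: one forms the discrepancies $\widehat{\zeta}_i=\widehat{\gamma}_i^{-1}\circ(\mathrm{id}_{R_i}\otimes\widehat{\gamma}_{i+1})-\mathrm{id}$, observes that $\mathrm{id}_\k\otimes\widehat{\zeta}_i=0$ so each $\widehat{\zeta}_i$ is nilpotent over the Artinian ring $R_i$, lifts these endomorphisms to all higher levels (possible because the modules are free over Artinian quotients, cf.\ Lemma~\ref{lemma3.7}), and multiplies the $\widehat{\gamma}_j$ by the resulting units $(\mathrm{id}+\widehat{\zeta}_k^{(j)})^{-1}$ to obtain a genuinely compatible system $\widehat{f}_j$ whose inverse limit is the desired isomorphism. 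You should either supply this correction argument or an equivalent inductive one that does not invoke $\SEnd_{\Ar}(\V)=\k$.
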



\begin{proof}
For all objects $R$ in $\widehat{\Ca}$, we consider the natural map 
\begin{equation}\label{contfunt}
\Gamma: \widehat{\Fun}_{\Ar,\V}(R)\to \invlim_i\Fun_{\Ar,\V}(R_i)
\end{equation}
defined by $\Gamma([\M, \widehat{\phi}])=\{[\M_i,\widehat{\phi}_i]\}_{i=1}^\infty$, where for all $i\geq 1$, $R_i=R/\m_R^i$, $\pi_i:R\to R_i$ is the natural projection, and $[\M_i,\widehat{\phi}_i]=\widehat{\Fun}_{\A,\V}(\pi_i)([\M,\widehat{\phi}])$. We first prove that $\Gamma$ as in (\ref{contfunt}) is surjective. Let $\{[\N_i,\widehat{\psi}_i]\}_{i=1}^\infty\in  \invlim_i\Fun_{\Ar,\V}(R_i)$. Then for each $i\geq 1$, there exists an isomorphism of $R_i\Ar$-modules
\begin{equation*}
\widehat{\alpha}_i:R_i\otimes_{R_{i+1}}\N_{i+1}\to \N_i
\end{equation*}
such that $\widehat{\psi}_i\circ (\mathrm{id}_\k\otimes \widehat{\alpha}_i)=\widehat{\psi}_{i+1}$. Let $(\N'_1,\widehat{\psi}'_1)= (\N_1,\widehat{\psi}_1)$. For each $i\geq 2$, the natural surjection $R_{i+1}\to R_i$ induces a surjective $R_{i+1}\Ar$-module homomorphism
\begin{equation*}
\N_{i+1}\to R_i\otimes_{R_{i+1}}\N_{i+1}.
\end{equation*}
\noindent
Hence, for all $i\geq 2$, define $(\N'_i,\widehat{\psi}'_i)$ as follows: $\N'_i = R_i\otimes_{R_i+1}\N_{i+1}$ and $\widehat{\psi}'_i$ is the composition
\begin{equation*}
\k\otimes_{R_i}\N'_i\cong \k\otimes_{R_{i+1}}\N_{i+1}\xrightarrow{\widehat{\psi}_{i+1}}\V.
\end{equation*}
Then for all $i\geq 1$, $(\N'_i,\widehat{\psi}'_i)$ is a lift $\V$ over $R_i$, which is  also isomorphic to $(\N_i, \widehat{\psi}_i)$  and there exists surjective $R_i\Ar$-module homomorphisms 
\begin{equation*}
\widehat{\beta}^{i+1}_i: \N'_{i+1}\to \N'_i 
\end{equation*}
that induces a natural isomorphism $R_i\otimes_{R_{i+1}}\N'_{i+1} = \N'_i$ which preserves the $\Ar$-module structure. Moreover $\widehat{\psi}'_i$ is equal to the composition 
\begin{equation*}
\k\otimes_{R_i}\N'_i=\k\otimes_{R_i}(R_i\otimes_{R_{i+1}}\N'_{i+1})\cong \k\otimes_{R_{i+1}}\N'_{i+1}\xrightarrow{\widehat{\psi}'_{i+1}}\V.  
\end{equation*}
Therefore $\{[\N'_i, \widehat{\psi}_i],\widehat{\beta}^{i+1}_i\}_{i=1}^\infty$ forms an inverse system of deformations of $\V$. If we let $\N'=\invlim_i\N'_i$  and $\widehat{\psi}=\invlim_i\widehat{\psi}'_i$, then we have that $(\N', \widehat{\psi}')$ is a lift of $\V$ over $R=\invlim_iR_i$ such that $\Gamma([\N', \widehat{\psi}'])=\{[\N_i, \widehat{\psi}_i]\}_{i=1}^\infty$. This proves that $\Gamma$ is surjective. In order to prove that $\Gamma$ is injective, assume that $[\M, \widehat{\phi}]$ and $[\M', \widehat{\phi}']$ are lifts of $\V$ over $R$ such that $\Gamma([\M, \widehat{\phi}])=\Gamma([\M', \widehat{\phi}'])$. Then for all $i\geq 1$, there is an isomorphism of $R_i\Ar$-modules 
\begin{equation*}
\widehat{\gamma}_i:R_i\otimes_R\M\to R_i\otimes_R\M', 
\end{equation*}
such that $\widehat{\phi}'\circ (\mathrm{id}_\k\otimes_{R_i}\widehat{\gamma}_i)=\widehat{\phi}$. 
For each $i\geq 1$, define 
\begin{equation*}
\widehat{\zeta}_i=\widehat{\gamma}_i^{-1}\circ (\mathrm{id}_{R_i}\otimes_{R_{i+1}}\widehat{\gamma}_{i+1})-\mathrm{id}_{R_i\otimes_R\M},
\end{equation*}
and set $\widehat{\zeta}_i^{(i)}=\widehat{\zeta}_i$. Observe that $\mathrm{id}_{\k}\otimes_{R_i}\widehat{\zeta}_i=0$. Since the $R_i$ are Artinian, for each $i\geq 2$, and $i\leq \ell< j$, we can lift the $R_\ell\Ar$-module homomorphism $\widehat{\zeta}_i^{(\ell)}\in \End_{R_\ell\Ar}(R_\ell\otimes_R \M)$ to an $R_j\Ar$-module homomorphism $\widehat{\zeta}_i^{(j)}\in \End_{R_j\Ar}(R_j\otimes_R \M)$ such that $\mathrm{id}_{R_\ell}\otimes_{R_j}\widehat{\zeta}_i^{(j)}=\widehat{\zeta}_i^{(\ell)}$. Moreover, since for all $2\leq i\leq \ell< j$, the morphism $\widehat{\zeta}_i^{(j)}$ is nilpotent, it follows that $\mathrm{id}_{R_j\otimes_R\M} + \widehat{\zeta}_{i}^{(j)}$ is invertible.  Next let $\widehat{f}_1 = \widehat{\gamma}_1$ and $\widehat{f}_2 = \widehat{\gamma}_2$, and for all $j\geq 3$, let 
\begin{align*}
\widehat{f}_j=\widehat{\gamma}_j\circ (\mathrm{id}_{R_j\otimes_R\M} + \widehat{\zeta}_{j-1}^{(j)})^{-1}\circ (\mathrm{id}_{R_j\otimes_R\M} + \widehat{\zeta}_{j-2}^{(j)})^{-1}\circ \cdots \circ (\mathrm{id}_{R_j\otimes_R\M} + \widehat{\zeta}_{2}^{(j)})^{-1}
\end{align*}
Note that $\mathrm{id}_{R_1}\otimes_{R_2}\widehat{f}_2=\widehat{f}_1$. On the other hand, if $j\geq 2$, then 
\begin{align*}
\mathrm{id}_{R_j}\otimes_{R_{j+1}}\widehat{f}_{j+1}&= (\mathrm{id}_{R_j}\otimes_{R_{j+1}}\widehat{\gamma}_{j+1})\circ  (\mathrm{id}_{R_j\otimes_R\M} + \widehat{\zeta}_{j}^{(j)})^{-1}\circ (\mathrm{id}_{R_j\otimes_R\M} + \widehat{\zeta}_{j-1}^{(j)})^{-1}\circ \cdots \circ (\mathrm{id}_{R_j\otimes_R\M} + \widehat{\zeta}_{2}^{(j)})^{-1}\\
&= \widehat{\gamma}_j\circ (\mathrm{id}_{R_j\otimes_R\M}+\widehat{\zeta}_j)\circ  (\mathrm{id}_{R_j\otimes_R\M} + \widehat{\zeta}_j)^{-1}\circ (\mathrm{id}_{R_j\otimes_R\M} + \widehat{\zeta}_{j-1}^{(j)})^{-1}\circ \cdots \circ (\mathrm{id}_{R_j\otimes_R\M} + \widehat{\zeta}_{2}^{(j)})^{-1}\\
&=\widehat{f}_j.
\end{align*}
Moreover, for each $i\geq 1$, we have 
\begin{align*}
\widehat{\phi}'\circ (\mathrm{id}_\k\otimes_{R_i} \widehat{f}_i)&= \widehat{\phi}'\circ (\mathrm{id}_\k\otimes_{R_i} \widehat{\gamma}_i)\circ (\mathrm{id}_{\k\otimes_R \M}+\mathrm{id}_\k\otimes_{R_{i-1}}\widehat{\zeta}_{i-1})^{-1}\circ \cdots \circ (\mathrm{id}_{\k\otimes_R \M}+\mathrm{id}_\k\otimes_{R_2}\widehat{\zeta}_{2})^{-1}\\
&= \widehat{\phi}. 
\end{align*}
Let $\widehat{f}=\invlim_i\widehat{f}_i$. Then $\widehat{f}:\M\to \M'$ is an isomorphism of $R\Ar$-modules such that $\widehat{\phi}'\circ (\mathrm{id}_\k\otimes_R\widehat{f})= \widehat{\phi}$. This proves that $\Gamma$ is injective. This finishes the proof of Lemma \ref{lemma3.15}.  
\end{proof}

As a consequence of Remark \ref{rem3.4} together with Lemmata \ref{lemma3.12}, \ref{lemma3.13}, \ref{lemma3.14}, and \ref{lemma3.15}, we obtain the following result, which is a version of \cite[Prop. 2.1]{blehervelez} for finite dimensional modules over $\Ar$.

\begin{proposition}\label{prop3.17}
The functor $\Fun_{\Ar,\V}$ has a pro-representable hull $R(\Ar,\V)$ in $\widehat{\Ca}$ as defined in \cite[Def. 2.7]{sch}, and the functor $\widehat{\Fun}_{\Ar,\V}$ is continuous. Moreover, there is an isomorphism of $\k$-vector spaces 

\begin{equation}\label{tangentspaces}
t_{\V}\to \Ext_{\Ar}^1(\V,\V).
\end{equation} 

If $\SEnd_{\Ar}(\V)= \k$, then $R(\Ar,\V)$ represents $\widehat{\Fun}_{\Ar,\V}$. 
\end{proposition}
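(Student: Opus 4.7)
The plan is to apply Schlessinger's criterion \cite[Thm. 2.11]{sch} to the restricted functor $\Fun_{\Ar,\V}$ and then transfer the conclusion to all of $\widehat{\Ca}$ using continuity. First I would collect that $\Fun_{\Ar,\V}$ satisfies (H$_1$) by Lemma \ref{lemma3.12}, (H$_2$) by Lemma \ref{lemma3.13}(i), and (H$_3$) by Remark \ref{rem3.4} (which rests on the finite-dimensionality of $t_{\V}$ established in Remark \ref{rem3.3}(iii)). By Schlessinger's theorem these three properties are precisely what is needed to guarantee that $\Fun_{\Ar,\V}$ admits a pro-representable hull $R(\Ar,\V)$ in $\widehat{\Ca}$ in the sense of \cite[Def. 2.7]{sch}.

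Next, the tangent space isomorphism (\ref{tangentspaces}), namely $t_{\V}\cong \Ext_{\Ar}^1(\V,\V)$, is exactly the content of Lemma \ref{lemma3.14}, and the continuity of $\widehat{\Fun}_{\Ar,\V}$ in the sense of (\ref{cont}) is exactly Lemma \ref{lemma3.15}. I would record these directly as the second and third assertions, without further work.

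Finally, under the additional hypothesis $\SEnd_{\Ar}(\V)=\k$, Lemma \ref{lemma3.13}(ii) provides (H$_4$); Schlessinger's theorem then upgrades the hull to a pro-representation of $\Fun_{\Ar,\V}$ by $R(\Ar,\V)$ on the full subcategory of Artinian objects in $\widehat{\Ca}$. Combining this with the continuity of $\widehat{\Fun}_{\Ar,\V}$ established in Lemma \ref{lemma3.15} extends the representation from the Artinian subcategory to all of $\widehat{\Ca}$: any deformation of $\V$ over an object $R$ of $\widehat{\Ca}$ is determined by, and arises from, its reductions over the Artinian quotients $R/\m_R^i$, each of which corresponds uniquely to a morphism $R(\Ar,\V)\to R/\m_R^i$, and these morphisms assemble into a unique morphism $R(\Ar,\V)\to R$.

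Since all of the genuinely technical input (lifting idempotents, lifting stable endomorphisms and the bijection between isomorphism classes of lifts, the pullback construction for (H$_1$), and the inductive telescoping argument for continuity) has already been carried out in Lemmata \ref{lemma3.6}--\ref{lemma3.15}, the proof of Proposition \ref{prop3.17} is purely a bookkeeping exercise: there is no new obstacle to overcome, and the only point that requires a sentence of attention is the compatibility of the pro-representation with the continuity, which follows formally by taking inverse limits of the universal property.
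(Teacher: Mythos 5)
Your proposal is correct and follows exactly the route the paper takes: it assembles Remark \ref{rem3.4} and Lemmata \ref{lemma3.12}, \ref{lemma3.13}, \ref{lemma3.14}, \ref{lemma3.15} into Schlessinger's criteria (H$_1$)--(H$_3$) for the hull, (H$_4$) for representability when $\SEnd_{\Ar}(\V)=\k$, and uses continuity to pass from Artinian objects to all of $\widehat{\Ca}$. This is precisely the paper's (one-sentence) argument, spelled out.
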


The following definition is the version of \cite[Def. 2.3]{blehervelez} for finite dimensional modules over $\Ar$.  

\begin{definition}\label{def3.18}
By using the notation and the results in Proposition \ref{prop3.17}, it follows that there exists a deformation $[U(\Ar,\V), \phi_{U(\Ar,\V)}]$ of $\V$ over $R(\Ar,\V)$ such that for each $R$ in $\widehat{\Ca}$, the map 
\begin{equation}\label{repres}
\Hom_{\widehat{\Ca}}(R(\Ar,\V),R)\to \widehat{\Fun}_{\Ar,\V}(R)
\end{equation}
which sends $\theta\in \Hom_{\widehat{\Ca}}(R(\Ar,\V),R)$ to $ \widehat{\Fun}_{\Ar,\V}(\theta)([U(\Ar,\V), \phi_{U(\Ar,\V)}])$ is surjective, and this map is bijective provided that $R = \k[\epsilon]$ is the ring of dual numbers with $\epsilon^2=0$. Moreover, $R(\Ar, \V)$ represents $\widehat{\Fun}_{\A,\V}$ provided that (\ref{repres}) is bijective for all $R$ in $\widehat{\Ca}$. The ring $R(\Ar,\V)$ is called the {\it versal deformation ring} of $\V$, and $[U(\Ar,\V), \phi_{U(\Ar,\V)}]$ the {\it versal deformation} of $\V$. In general, $R(\Ar,\V)$ is unique up to (a non-canonical) isomorphism. 
If $R(\Ar,\V)$ represents $\widehat{\Fun}_{\Ar,\V}$, then in this situation $R(\Ar,\V)$ is called the {\it universal deformation ring} of $\V$, and $[U(\Ar,\V), \phi_{U(\Ar,\V)}]$ is the {\it universal deformation ring} of $\V$. In general, $R(\Ar,\V)$ is unique up to a canonical isomorphism. \end{definition}

\begin{remark}\label{remark3}
\begin{enumerate}
\item It follows from the isomorphism of $\k$-vector spaces (\ref{tangentspaces}) that if $\dim_\k \Ext_{\Ar}^1(\V,\V)=r$, then the versal deformation ring $R(\Ar,\V)$ is isomorphic to a quotient algebra of the power series ring  $\k[\![t_1,\ldots,t_r]\!]$ and $r$ is minimal with respect to this property. In particular, if $\V$ is such that $\Ext_{\Ar}^1(\V,\V)=0$, then $R(\Ar,\V)$ is universal and isomorphic to $\k$ (see \cite[Remark 2.1]{bleher15} for more details).

\item Because of the continuity of the deformation functor, most of the arguments concerning $\widehat{\Fun}_{\Ar,\V}$ can be carried out for $\Fun_{\Ar,\V}$, and thus we are able to restrict ourselves to discuss liftings of $\Ar$-modules over Artinian objects in $\widehat{\Ca}$. 
\end{enumerate}
\end{remark}

In the following, we assume that $\SEnd_{\Ar}(\V)=\k$. Note that by Proposition \ref{prop3.17}, the versal deformation ring $R(\Ar,\V)$ is universal. 

\begin{lemma}\label{lemma3.19}
Let $\P$ be a finite dimensional projective $\Ar$-module. Then the versal deformation ring $R(\Ar, \V\oplus \P)$ is universal and isomorphic to $R(\Ar, \V)$.
\end{lemma}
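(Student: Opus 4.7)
The plan is to show that the deformation functors $\widehat{\Fun}_{\Ar,\V}$ and $\widehat{\Fun}_{\Ar,\V\oplus\P}$ are naturally isomorphic; by Proposition \ref{prop3.17} and Yoneda, this forces the universal rings to be isomorphic.

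First I would verify that $\SEnd_{\Ar}(\V\oplus\P)=\k$. Since $\P$ is projective-injective in the Frobenius category $\Ar$-mod, any morphism factoring through $\P$ (in either direction) is zero in $\Ar$-\underline{mod}. Therefore $\SEnd_{\Ar}(\V\oplus\P)\cong \SEnd_{\Ar}(\V)=\k$, so by Proposition \ref{prop3.17} the versal deformation ring $R(\Ar,\V\oplus\P)$ is indeed universal.

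Next I would construct a natural transformation $\Psi_R\colon\widehat{\Fun}_{\Ar,\V}(R)\to\widehat{\Fun}_{\Ar,\V\oplus\P}(R)$ by sending $[\M,\widehat{\phi}]$ to $[\M\oplus\P_R,\widehat{\phi}\oplus\pi_{R,\P}]$, where $\P_R$ and $\pi_{R,\P}$ are as in Lemma \ref{lemma3.6}(i). This is manifestly well defined on isomorphism classes and compatible with base change $\theta\colon R\to R'$, hence natural. For the inverse, given a lift $(\N,\widehat{\psi})$ of $\V\oplus\P$ over $R$, I would use Lemma \ref{lemma3.6}(i) together with Lemma \ref{lemma3.11} to split off the projective summand: lift the composition $\P_R\twoheadrightarrow \P\hookrightarrow \V\oplus\P$ through $\widehat{\psi}$ to a map $\P_R\to\N$ (possible because $\P_R$ is projective over $R\Ar$), and lift the retraction $\V\oplus\P\to\P$ via $\widehat{\psi}$ to $\N\to \P_R$; the resulting short exact sequence $0\to \M\to \N\to\P_R\to 0$ splits by Lemma \ref{lemma3.11}, producing a lift $(\M,\widehat{\phi})$ of $\V$ over $R$ with $\N\cong\M\oplus\P_R$. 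Define $\Phi_R([\N,\widehat{\psi}])=[\M,\widehat{\phi}]$.

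The main obstacle is showing that $\Phi_R$ is well defined, i.e., independent of the choices of the splittings. Here I would invoke the hypothesis $\SEnd_{\Ar}(\V)=\k$ together with Lemma \ref{lemma3.9}: any two such splittings produce lifts $(\M,\widehat{\phi})$ and $(\M',\widehat{\phi}')$ that are isomorphic as $R\Ar$-modules (by cancellation of the projective summand $\P_R$, using Lemmata \ref{lemma3.8} and \ref{lemma3.9} to handle the stable endomorphisms), and then Lemma \ref{lemma3.9} upgrades this to an isomorphism of deformations. Granted well-definedness, the identities $\Phi_R\circ\Psi_R=\mathrm{id}$ and $\Psi_R\circ\Phi_R=\mathrm{id}$ are immediate from the construction (the second using the splitting just produced). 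Naturality of $\Phi_R$ in $R$ follows because base change commutes with direct sums and with the projective cover $\P_R$. Hence $\Psi\colon\widehat{\Fun}_{\Ar,\V}\to\widehat{\Fun}_{\Ar,\V\oplus\P}$ is an isomorphism of functors, and by the uniqueness of the object representing these (now universal) functors, we conclude $R(\Ar,\V\oplus\P)\cong R(\Ar,\V)$ in $\widehat{\Ca}$.
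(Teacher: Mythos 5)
Your argument is correct and takes essentially the same route as the paper, which only sketches this proof by deferring to Lemma \ref{lemma3.11} and to the arguments of Bleher--V\'elez; your construction of mutually inverse natural transformations by adding and splitting off the projective summand $\P_R$, with well-definedness secured by $\SEnd_{\Ar}(\V)=\k$ via Lemmata \ref{lemma3.8} and \ref{lemma3.9}, is exactly what those references supply. One small remark: the sequence $0\to\M\to\N\to\P_R\to 0$ splits simply because $\P_R$ is a projective $R\Ar$-module; Lemma \ref{lemma3.11} is the tool one needs if $\P_R$ is instead realized as a submodule of $\N$.
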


The proof of Lemma \ref{lemma3.19} is obtained by using  the same arguments as those in the proof of \cite[Thm. 2.6 (iii)]{blehervelez}. Namely, since $\SEnd_{\Ar}(\V\oplus \P)=\SEnd_{\Ar}(\V)=\k$, it follows by Proposition \ref{prop3.17} that $R(\Ar, \V\oplus \P)$ is universal. Moreover, we use Lemma \ref{lemma3.11} together with adapting the arguments in the proof of \cite[Prop. 2.6 \& Cor. 2.7]{bleher14} to our context. 

\begin{lemma}\label{lemma3.20}
The versal deformation ring $R(\Ar, \Omega_{\Ar} \V)$ is universal and isomorphic to $R(\Ar, \V)$.
\end{lemma}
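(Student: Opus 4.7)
The plan is to adapt the proof of the analogous statement for Frobenius algebras in \cite[Thm.~1.1]{blehervelez}, using that $\Ar$-mod is a Frobenius category (Remark \ref{rem1.1}(i)). First, since $\Omega_{\Ar}$ is an auto-equivalence of the triangulated stable category $\Ar\textup{-\underline{mod}}$ (Remark \ref{rem1.1}(ii)--(iii)), we have $\SEnd_{\Ar}(\Omega_{\Ar}\V) \cong \SEnd_{\Ar}(\V) = \k$, and Proposition \ref{prop3.17} then implies that $R(\Ar, \Omega_{\Ar}\V)$ is universal. To obtain the isomorphism $R(\Ar, \Omega_{\Ar}\V) \cong R(\Ar, \V)$ it suffices, by Lemma \ref{lemma3.15} and Remark \ref{remark3}(ii), to exhibit a natural isomorphism of functors $\Fun_{\Ar, \V} \cong \Fun_{\Ar, \Omega_{\Ar}\V}$ on Artinian objects $R$ of $\widehat{\Ca}$.

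For the forward map $\Xi_R$, I fix once and for all a projective $\Ar$-cover $\widehat{\psi}: P(\V) \to \V$ with kernel $\Omega_{\Ar}\V$. Given a lift $(\M, \widehat{\phi})$ of $\V$ over $R$, Lemma \ref{lemma3.6}(i) provides a lift $(P(\V)_R, \pi_{R,P(\V)})$ of $P(\V)$ with $P(\V)_R$ projective over $R\Ar$. Projectivity allows me to lift $\widehat{\phi}^{-1}\circ\widehat{\psi}$ to an $R\Ar$-morphism $\widehat{\Psi}: P(\V)_R \to \M$ whose reduction modulo $\m_R$ is $\widehat{\psi}$; Nakayama's lemma then forces $\widehat{\Psi}$ to be surjective. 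Since $\M$ is free over $R$, the short exact sequence $0\to \widehat{K}\to P(\V)_R \xrightarrow{\widehat{\Psi}}\M\to 0$ splits over $R$, so $\widehat{K} := \ker\widehat{\Psi}$ is $R$-free and carries an induced $\Ar$-isomorphism $\widehat{\phi}_K: \k\otimes_R\widehat{K}\xrightarrow{\sim}\Omega_{\Ar}\V$. I set $\Xi_R([\M,\widehat{\phi}]) := [\widehat{K}, \widehat{\phi}_K]$. For the inverse $\Xi_R'$, I exploit that $P(\V)_R$ is also injective in $R\Ar$-mod (Lemma \ref{lemma3.6}(iii) together with the Frobenius property). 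Given a lift $(\N, \widehat{\psi}_0)$ of $\Omega_{\Ar}\V$ over $R$, the inclusion $\Omega_{\Ar}\V \hookrightarrow P(\V)$ lifts through injectivity to a morphism $\widehat{\iota}: \N \to P(\V)_R$, which by a standard Nakayama argument is a monomorphism with $R$-free cokernel $\widehat{L}$ satisfying $\k\otimes_R\widehat{L}\cong \V$; I set $\Xi_R'([\N,\widehat{\psi}_0]) := [\widehat{L}, \widehat{\phi}_L]$ for the induced identification $\widehat{\phi}_L$.

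The main obstacle is verifying that $\Xi_R$ and $\Xi_R'$ are well-defined on isomorphism classes of lifts, natural in $R$, and mutually inverse. Well-definedness reduces to showing that any two choices of the lifted morphism $\widehat{\Psi}$ (resp. $\widehat{\iota}$) differ by a morphism factoring through a projective $R\Ar$-module; Lemmata \ref{lemma3.8} and \ref{lemma3.11} then produce an isomorphism of the corresponding kernels (resp. cokernels) that is compatible with the identifications with $\Omega_{\Ar}\V$ (resp. $\V$). Naturality in $R$ follows because base change $R'\otimes_{R,\theta}-$ preserves the defining split short exact sequence. Mutual inverseness reduces to a diagram chase exploiting the uniqueness of the projective cover $P(\V)\to\V$: applying $\Xi_R'$ to $[\widehat{K},\widehat{\phi}_K]$ recovers an $R\Ar$-extension of $\V$ by $\Omega_{\Ar}\V$ via $P(\V)_R$ which, by Lemma \ref{lemma3.11} and Lemma \ref{lemma3.9}, must be isomorphic to $(\M,\widehat{\phi})$ as a lift of $\V$; the argument for $\Xi_R\circ \Xi_R'$ is symmetric and relies on the injectivity side of the Frobenius structure.
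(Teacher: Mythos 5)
Your proposal is correct and follows essentially the same route as the paper, whose own ``proof'' is just a pointer to \cite[Thm.~2.6~(iv)]{blehervelez} with Claims 1 and 7 there replaced by Lemmata \ref{lemma3.7} and \ref{lemma3.6}~(iii). You have in fact spelled out exactly that argument: universality via $\SEnd_{\Ar}(\Omega_{\Ar}\V)\cong\SEnd_{\Ar}(\V)=\k$ and Proposition \ref{prop3.17}, and the isomorphism of deformation rings via the natural bijection of deformation functors obtained by lifting the projective cover $P(\V)\to\V$ to $P(\V)_R\to\M$ and taking kernels, with the inverse direction resting on the projectivity of $D_R(P(\V)_R)$ (Lemma \ref{lemma3.6}~(iii)) and the lifting of morphisms into projectives (Lemma \ref{lemma3.7}).
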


The proof of Lemma \ref{lemma3.20} is obtained in the same way as that of \cite[Thm.2.6 (iv)]{blehervelez} by replacing Claims 1 and 7 in \cite{blehervelez} by Lemmata \ref{lemma3.7} and \ref{lemma3.6} (iii), respectively.

\begin{lemma}\label{lemma3.21}{(cf. \cite[Prop. 2.5]{gaviria-velez})}
Let $\nu_{\Ar}$ be as in Remark \ref{rem1.1} (v). Then the versal deformation ring $R(\Ar, \nu_{\Ar} \V)$ is universal and isomorphic to $R(\Ar, \V)$.
\end{lemma}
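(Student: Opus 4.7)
The plan is to produce a natural isomorphism of deformation functors
$\widehat{\Fun}_{\Ar,\V}\cong \widehat{\Fun}_{\Ar,\nu_{\Ar}\V}$
induced by the index shift $\nu_{\Ar}$, and then conclude from the uniqueness of (universal) versal deformation rings together with Proposition \ref{prop3.17}.

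First, I would lift $\nu_{\Ar}$ to an exact automorphism $\nu_{R\Ar}$ of $R\Ar\textup{-mod}$ for every Artinian $R$ in $\widehat{\Ca}$. Namely, given an $R\Ar$-module $\M=(M_i,g_i)_{i\in \Z}$ as in (\ref{tensorcomplxM}), set $(\nu_{R\Ar}\M)_i=M_{i+1}$ with transition maps $g_{i+1}$ after the identification of Remark \ref{rem3.2}. Using the description of $R\Ar\textup{-mod}$ as $\Z$-graded $T(R\A)$-modules (the relative version of Remark \ref{rem1.1}(iv)), $\nu_{R\Ar}$ is nothing but the degree-shift by $1$, so it is an exact autoequivalence that preserves freeness over $R$ and commutes with base change $R'\otimes_{R,\theta}(-)$. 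In particular $\k\otimes_R\nu_{R\Ar}\M$ is canonically isomorphic to $\nu_{\Ar}(\k\otimes_R\M)$, so the isomorphism $\widehat{\phi}:\k\otimes_R\M\to \V$ induces an isomorphism $\nu_{\Ar}\widehat{\phi}:\k\otimes_R\nu_{R\Ar}\M\to \nu_{\Ar}\V$, and hence $(\nu_{R\Ar}\M,\nu_{\Ar}\widehat{\phi})$ is a lift of $\nu_{\Ar}\V$ over $R$.

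Next, I would verify that the assignment $(\M,\widehat{\phi})\mapsto (\nu_{R\Ar}\M,\nu_{\Ar}\widehat{\phi})$ sends isomorphic lifts to isomorphic lifts, using that $\nu_{R\Ar}$ is a functor. This yields a set map
$\nu^R_*:\widehat{\Fun}_{\Ar,\V}(R)\to \widehat{\Fun}_{\Ar,\nu_{\Ar}\V}(R),$
which is a bijection because $\nu_{R\Ar}^{-1}$ (the inverse shift) provides a two-sided inverse by the same recipe. Naturality in $R$ follows from the fact that $\nu_{R\Ar}$ acts only on the $\Ar$-grading, so $R'\otimes_{R,\theta}\nu_{R\Ar}\M = \nu_{R'\Ar}(R'\otimes_{R,\theta}\M)$ canonically. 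Thus $\{\nu^R_*\}_R$ is a natural isomorphism of functors on $\widehat{\Ca}$. Restricting to Artinian objects, the continuity of both functors (Lemma \ref{lemma3.15}, cf.\ Proposition \ref{prop3.17}) ensures that the isomorphism propagates to $\widehat{\Fun}_{\Ar,\V}\cong\widehat{\Fun}_{\Ar,\nu_{\Ar}\V}$.

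Finally, since $\nu_{\Ar}$ is an automorphism of $\Ar\textup{-\underline{mod}}$, we have $\SEnd_{\Ar}(\nu_{\Ar}\V)\cong\SEnd_{\Ar}(\V)=\k$, so Proposition \ref{prop3.17} ensures that $R(\Ar,\nu_{\Ar}\V)$ is universal; that is, $R(\Ar,\nu_{\Ar}\V)$ represents $\widehat{\Fun}_{\Ar,\nu_{\Ar}\V}$. Combining this with the natural isomorphism of representable functors constructed above, the Yoneda lemma yields a (canonical) $\widehat{\Ca}$-isomorphism $R(\Ar,\nu_{\Ar}\V)\cong R(\Ar,\V)$. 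The main obstacle I anticipate is purely bookkeeping: carefully checking that the transition maps $g_{i+1}$ used to define $\nu_{R\Ar}\M$ really encode an $R\Ar$-module structure under the $T(R\A)$-graded viewpoint and that naturality holds strictly, not just up to unspecified isomorphism. Once this is in place, the rest of the argument is entirely formal.
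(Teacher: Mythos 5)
Your proposal is correct and follows essentially the same route as the paper's proof: establish $\SEnd_{\Ar}(\nu_{\Ar}\V)\cong\SEnd_{\Ar}(\V)=\k$ to get universality from Proposition \ref{prop3.17}, construct a natural bijection $\Def_{\Ar}(\V,R)\to\Def_{\Ar}(\nu_{\Ar}\V,R)$ over Artinian $R$, and use continuity to conclude $R(\Ar,\V)\cong R(\Ar,\nu_{\Ar}\V)$. The only difference is that you spell out explicitly, via the degree-shift on $R\Ar$-modules, the bijection that the paper merely declares to be ``straightforward.''
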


\begin{proof}
Note that since $\nu_{\Ar}$ is an automorphism of $\Ar$-\underline{mod}, then $\SEnd_{\Ar}(\nu_{\Ar}\V)\cong \SEnd_{\Ar}(\V)=\k$. Thus by Proposition \ref{prop3.17}, the versal deformation ring $R(\Ar, \nu_{\Ar}\V)$ is universal. It is straighforward to prove that for all Artinian $R$ in $\widehat{\Ca}$, there exists a bijection between set of deformations 
\begin{equation}
g_R:\Def_{\Ar}(\V,R)\to \Def_{\Ar}(\nu_{\Ar}\V,R)
\end{equation}
which is also natural with respect of morphisms of Artinian objects in $\widehat{\Ca}$. Since both $\widehat{\Fun}_{\Ar,\V}$ and $\widehat{\Fun}_{\Ar,\nu_{\Ar}\V}$ are both continuous by Proposition \ref{prop3.17}, it follows that $R(\Ar,\V)$ and $R(\Ar, \nu_{\Ar}\V)$ are isomorphic in $\widehat{\Ca}$.
\end{proof}

\subsection{Proof of Theorem \ref{thm1}}
In the following we end this section by using the previous results in order to prove Theorem \ref{thm1}. 
The statement in Theorem \ref{thm1} (i) follows from Proposition \ref{prop3.17} and Lemma \ref{lemma3.19}; Theorem \ref{thm1} (ii) follows from Lemma \ref{lemma3.20}; and Theorem \ref{thm1} (iii) follows from Lemma \ref{lemma3.20} together with Lemma \ref{lemma3.21} and Remark \ref{rem1.1} (v). 

\section{A particular example: the repetitive algebra of the Kronecker algebra}\label{sec4}
Let $\A_1=\k Q$ and $\Ar_1=\k\widehat{Q}/\langle\widehat{\rho}\rangle$ be the $2$-Kronecker $\k$-algebra and its repetitive algebra as in (\ref{kronecker}) and (\ref{repquiver}), respectively.  


Observe that for each $z\in \Z$, the radical series of the indecomposable projective $\Ar_1$-modules corresponding to the vertices $1_z$ and $2_z$ are given as in Figure \ref{fig1}, where for each vertex $v\in \Q_0$, $S_v$ denotes the corresponding simple $\Ar_1$-module. 
\begin{figure}[ht]
\begin{align*}
\xymatrix@=20pt{
&&S_{1_z}\ar[dl]_{\beta_z}\ar[dr]^{\alpha_z}&\\
\P_{1_z}:&S_{2_z}\ar[dr]_{\beta^\ast_z}&&S_{2_z}\ar[dl]^{\alpha^\ast_z}\\
&&S_{1_{z-1}}&
}&&&
\xymatrix@=20pt{
&&S_{2_z}\ar[dl]_{\beta^\ast_z}\ar[dr]^{\alpha^\ast_z}&\\
\P_{2_z}:&S_{1_{z-1}}\ar[dr]_{\beta_{z-1}}&&S_{1_{z-1}}\ar[dl]^{\alpha_{z-1}}\\
&&S_{2_{z-1}}&
}
\end{align*}
\caption{The radical series of the indecomposable projective $\Ar_1$-modules.}\label{fig1}.
\end{figure}
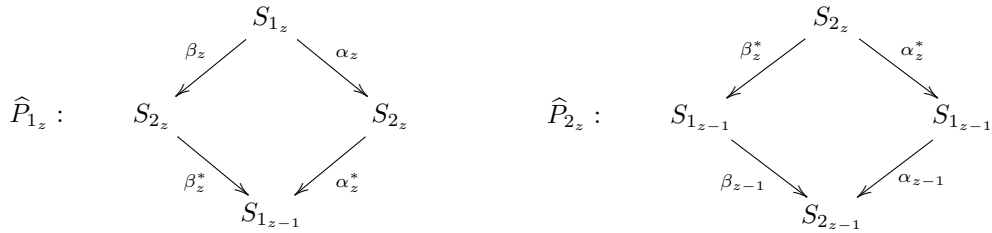

\begin{remark}\label{rem4.1}
As explained in e.g. \cite[\S II.1.3]{erdmann}, in order to study indecomposable non-projective $\Ar_1$-modules, irreducible morphisms between non-projective indecomposable modules over $\Ar_1$, and the stable Auslander-Reiten quiver of $\Ar_1$, it is enough to look at the string algebra (as in e.g. \cite[\S II.1.2]{erdmann}) $\Ar_0=\Ar_1/\mathrm{soc}\,\Ar_1$. In particular, this argument implies that all the indecomposable non-projective $\Ar_1$-modules are inflated from indecomposable $\Ar_0$-modules. In this article we are only interested in the class of indecomposable non-projective $\Ar_1$-modules that are inflations of so-called string $\Ar_0$-modules. It is important to mention that this approach was previously used in e.g. \cite[App.]{bleher9}, \cite[App. A]{calderon-giraldo-rueda-velez} and \cite[App. B]{blehervelez} in order to describe string modules over a number of symmetric special biserial algebras. 
\end{remark}

In the following, we review the definition of string modules over $\Ar_1$. Let $z\in \Z$ be fixed. For each arrow $\alpha_z, \alpha_z^\ast, \beta_z,\beta_z^\ast$ of $\Q$, we define a formal inverse by $\alpha_z^{-1}, (\alpha_z^\ast)^{-1}, \beta_z^{-1},(\beta_z^\ast)^{-1}$, respectively, and we let $\mathbf{s}(\alpha_z)=\mathbf{s}(\beta_z)=\mathbf{t}(\alpha_{z+1}^\ast)=\mathbf{t}(\beta_{z+1}^\ast)=\mathbf{t}(\alpha_z^{-1})=\mathbf{t}(\beta_z^{-1})=1_z$ and  $\mathbf{t}(\alpha_z)=\mathbf{t}(\beta_z)=\mathbf{s}(\alpha_z^\ast)=\mathbf{s}(\beta_z^\ast)=\mathbf{s}(\alpha_z^{-1})=\mathbf{s}(\beta_z^{-1})=2_z$, i.e. $\mathbf{s}$ and $\mathbf{t}$ indicate the vertex where an arrow or the formal inverse of an arrow starts and ends, respectively. By a {\it word} of length $n\geq 1$ we mean a sequence $w_n\cdots w_1$, where the $w_j$ is either an arrow or a formal inverse of an arrow, 
and where $\mathbf{s}(w_{j+1})=\mathbf{t}(w_{j})$ for  $1\leq j \leq n-1$. We define $(w_n\cdots w_1)^{-1}={w_1}^{-1}\cdots {w_n}^{-1}$, $\mathbf{s}(w_n\cdots w_1)=\mathbf{s}
(w_1)$ and $\mathbf{t}(w_n\cdots w_1)=\mathbf{t}(w_n)$.  If $v$ is a vertex of $\Q$, we define an empty word $\1_v$ of length zero with $\mathbf{t}(\1_v)=v=\mathbf{s}
(\1_v)$ and $(\1_v)^{-1}=\1_v$. 
We denote by $\mathcal{W}$ the set of all words and let 

\begin{equation*}
J=\{\beta_z^\ast\alpha_z,\alpha_{z-1}\beta_z^\ast,\alpha_z^\ast\beta_z, \beta_{z-1}\alpha_z^\ast,\alpha^\ast_z\alpha_z,\beta^\ast_z\beta_z, \alpha_{z-1}\alpha^\ast_z,\beta_{z-1}\beta^\ast_z: z\in \Z\}.
\end{equation*}
 
Note that $\Ar_0= \k \Q/\langle J \rangle$. Let $\sim$ be the equivalence relation on $\mathcal{W}$ defined by  $w\sim w'$ if and only if $w=w'$ or $w^{-1}
=w'$. A \textit{string} is a representative $C$ of an equivalence class under the relation $\sim$,  where either  $C=\1_v$ for some vertex $v$ of $Q$, or $C=w_n\cdots w_1$ with $n
\geq 1$ and $w_j\not=w_{j+1}^{-1}$ for $1\leq j\leq  n-1$ and no sub-word of $C$ or its formal inverse belong to $J$. If $C$ is a string such that $\mathbf{s}(C)=\mathbf{t}(C)$, then we let 
$C^0=\1_{\mathbf{t}(C)}$. If $C=w_n\cdots w_1$ and $D=v_m\cdots v_1$ are strings of 
length $n,m\geq 1$, respectively, we say that the composition $CD$ of $C$ and $D$ is defined provided that $w_n\cdots w_1v_m\cdots v_1$ is a string and write $CD=w_n\cdots 
w_1v_m\cdots v_1$. Observe that  $C\1_{\mathbf{s}(C)}\sim C$ and $\1_{\mathbf{t}(C)}C\sim C$. Moreover, if $C=w_n\cdots w_1$ is a string of length $n\geq 1$, then $C\sim w_n\cdots w_{j+1}\1_{\mathbf{t}(w_j)}w_j\cdots w_1$ for 
all $1\leq j\leq n-1$. If $C=w_n\cdots w_1$ is a string of length $n\geq 1$, then there exists an indecomposable non-projective $\Ar_1$-module $\M[C]$, called the {\it string 
module} corresponding to the string representative $C$, which can be  described as follows. There is an ordered $\k$-basis $\{z_0,z_1,\ldots,z_n\}$ of $\M[C]$ such that the action of $
\Ar_1$ 
on $\M[C]$ is given by the following representation $\varphi_C:\Ar_1\to \Mat(n+1,\k)$. Let $\text{\bf v}(j)=\mathbf{t}(w_{j+1})$ for $0\leq j\leq n-1$ and $\text{\bf v}(n)=\mathbf{s}(w_n)$. Then for each vertex $v\in \Q_0$, for each arrow $\gamma\in \Q_1$, and for all $0\leq j\leq n$, define
\begin{align*}
\varphi_C(v)(z_j)=\begin{cases}
z_j, &\text{ if $\text{\bf v}(j)=v$,}\\
0, &\text{ otherwise,}
\end{cases}
&&
\text{ and }
&&
\varphi_C(\gamma)(z_j)=\begin{cases}
z_{j-1}, &\text{ if $w_j=\gamma$,}\\
z_{j+1}, &\text{ if $w_{j+1}=\gamma^{-1}$,}\\
0, &\text{ otherwise.}
\end{cases}
\end{align*}

We call $\varphi_C$ the {\it canonical representation} and $\{z_0,z_1,\ldots,z_n\}$ a {\it canonical $\k$-basis} for $\M[C]$ relative to the string representative $C$. Note that $\M[C]
\cong \M[C^{-1}]$.  If $C=\1_v$ for some $v\in\Q_0$, 
then $\M[C]=S_v$ is the simple $\Ar_1$-module corresponding to the vertex $v$. From now on, we use $\M[\1_v]$ to denote the simple string $\Ar_1$-module corresponding to a vertex $v\in \Q_0$. It follows by Remark \ref{rem4.1} and \cite{buri} that the string $\Ar_1$-modules provide a description of indecomposable non-projective objects in $\Ar_1$-mod. 

The following definition is based on the description of irreducible morphisms between string modules for special biserial algebras as in e.g. \cite[\S II.5]{erdmann}. 

\begin{definition}\label{defi4.2}
Let $C$ and $C'$ be string representatives for $\Ar_1$.
\begin{enumerate}
\item $C$ is a {\it left hook} (resp. {\it right hook}) of $C'$ and write $C={_h}C'$ (resp. $C= C'_h$) provided that there exist arrows $\alpha, \beta \in \Q_1$ such that  $C$ is string equivalent to $\alpha\beta^{-1}C'$ (resp. $C'\alpha\beta^{-1}$);
\item $C$ is a {\it left co-hook} (resp. {\it right co-hook}) of $C'$ and write $C={_c}C'$ (resp. $C=C'_{c}$) provided that there exist arrows $\gamma, \delta \in \Q_1$ such that $C$ is string equivalent to $\gamma^{-1}\delta C'$ (resp. $C'\gamma^{-1}\delta$);
\item $C$ is said to be of {\it minimal string length} if there is no a string representative $C''$ for $\Ar_1$ such that $C={_h}C''$ or $C=C''_h$  or $C={_c}C''$ or $C=C''_c$.  
\end{enumerate}
It follows from \cite{buri} (see also \cite[\S II.5.3 \& \S II.6.3]{erdmann}) that all irreducible morphisms between string $\Ar_1$-modules are either canonical injections $\M[C]\to \M[C_h]$, $\M[C]\to \M[{_h}C]$, or canonical surjections $\M[C_c]\to \M[C]$, $\M[{_c}C]\to \M[C]$. 

\end{definition}


\begin{remark}
As in the statement of Theorem \ref{thm2}, we denote by $\Gamma_S(\Ar_1)$ the stable Auslander-Reiten quiver of $\Ar_1$, and let $\mathfrak{C}$ be a connected component of $\Gamma_S(\Ar_1)$. 
\begin{enumerate}
\item If $\mathfrak{C}$ contains the simple $\Ar_1$-module $\M[\1_{1_z}]$ corresponding to the vertex $1_z$ for some $z\in \Z$, then $\Omega_{\Ar_1} \mathfrak{C}$ contains the simple $\Ar_1$-module $\M[\1_{2_z}]$ corresponding to the vertex $2_z$. In this situation $\mathfrak{C}$ as well as $\Omega_{\Ar_1}\mathfrak{C}$ look as in Figure \ref{fig2} (left). 
\item If $\mathfrak{C}$ contains the string  $\Ar_1$-module $\M[\alpha_z]$ (resp. $\M[\beta_z]$) for some $z\in \Z$, then $\Omega_{\Ar_1} \mathfrak{C}$ contains the string $\Ar_1$-module $\M[\beta^\ast_z]$ (resp. $\M[\alpha^\ast_z]$). In this situation, $\mathfrak{C}$ as well as $\Omega_{\Ar_1} \mathfrak{C}$ look as in Figure \ref{fig2} (right). 
\end{enumerate}
\end{remark}

\begin{remark}\label{rem4.3}
Let $\mathfrak{C}$ be a connected component of $\Gamma_S(\Ar_1)$ containing string $\Ar_1$-modules. It follows from the description of the irreducible morphisms between string modules as in Definition \ref{defi4.2} that $\mathfrak{C}$ is completely determined by a string $\Ar_1$-module $\M[C]$, where $C$ is a string representative for $\Ar_1$ of minimal string length. Note also that the vertices and the arrows of $\Q$ induce string representatives of minimal string length.
\end{remark}

\begin{lemma}\label{lemma4.3}
Let $C$ be a string representative for $\Ar_1$ of minimal string length (as in Definition \ref{defi4.2} (iii)) that induces a string $\Ar_1$-module $\M[C]$. Then $C$ is string equivalent to the string induced by a vertex or by an arrow of $\Q$. 
\end{lemma}

\begin{proof}
If the string length of $C$ is $0$ or $1$, then there is nothing to prove. Assume then that $C$ has string length $\geq 2$. Then there is a string representative $C'$ for $\Ar_1$ such that $C$ is string equivalent to either $C'\lambda$ or $C'\sigma^{-1}$ for some arrows $\lambda, \sigma \in \Q_1$. Note that by assumption $C'$ has string length $\geq 1$. If $C'$ is string equivalent to an arrow of $\Q$ (i.e. $C'$ has string length $1$), then $C=(\1_v)_c$ or $C=(\1_v)_h$ for some vertex $v\in \Q_0$, which contradicts that $C$ is of minimal string length. Assume next that $C'$ has string length $\geq 2$. It follows that there exists a string representative $C''$ for $\Ar_1$ such that $C'$ is string equivalent to either $C''\gamma^{-1}$ or $C''\delta$ for some arrows $\gamma, \delta\in \Q_1$. Thus $C$ is string equivalent to either  
$C''\gamma^{-1}\lambda$ or $C''\delta\sigma^{-1}$, which implies that either $C=C''_c$ or $C=C''_h$, contradicting again that $C$ is of minimal string length. This finishes the proof of Lemma \ref{lemma4.3}.
\end{proof}

\begin{remark}\label{rem4.5}
In the following, we review the description of morphisms between string $\Ar_1$-modules as given in \cite{krause}. 
Let $S$ and $T$ be strings representatives for $\Ar_1$ and let $\M[S]$ and $\M[T]$ their respective string $\Ar_1$-modules. Suppose that $C$ is a substring of both $S$ and $T$ such that the following conditions (i) and (ii) are satisfied.
\begin{enumerate}
\item $S\sim S'CS''$, with ($S'$ of length zero or $S'=\hat{S}'\xi_1$) and ($S''$ of length zero or $S''=\xi_2^{-1}\hat{S}''$), where $S'$, $\hat{S}'$, $S''$, $\hat{S}''$ are strings and $\xi_1$, $\xi_2$ are arrows in $\Q$; and 
\item $T\sim T'CT''$, with ($T'$ of length zero or $T'=\hat{T}'\zeta_1^{-1}$) and ($T''$ of length zero or $T''=\zeta_2\hat{T}''$), where $T'$, $\hat{T}'$, $T''$, $\hat{T}''$ are strings and $\zeta_1$, $\zeta_2$ are arrows in $\Q$.
\end{enumerate}
Then by \cite{krause} there exists a composition of  $\Ar_1$-module homomorphisms
\begin{equation}\label{canhom}
\widehat{\sigma}_C:\M[S]\surjection \M[C]\injection \M[T], 
\end{equation}
where $\M[S]\surjection \M[C]$ denotes the canonical surjection and $\M[C]\injection \M[T]$ the canonical injection. We call $\sigma_C$ a {\it canonical homomorphism from $\M[S]$ to $\M[T]$}. Note that there may be several choices for $S'$, $S''$ (resp. $T'$, $T''$) in (i) (resp. (ii)). In other words, there may be several $\k$-linearly independent canonical homomorphisms factoring through $\M[C]$. By \cite{krause},  every $\Ar_1$-module homomorphism $\sigma:\M[S]\to \M[T]$ can be written as a unique $\k$-linear combination of canonical homomorphisms which factor through string modules corresponding to strings $C$ satisfying (i) and (ii). In particular, if $\M[S]=\M[T]$ then the canonical endomorphisms of $\M[S]$ generate $\End_{\Ar_1}(\M[S])$.

\end{remark}

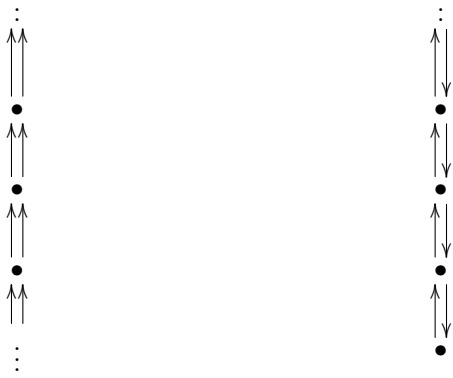
\begin{figure}
	\begin{align*}
	\xymatrix@=20pt{
       \vdots\\
       \bullet\ar@<0.5ex>[u]\ar@<-0.5ex>[u]\\
       \bullet\ar@<0.5ex>[u]\ar@<-0.5ex>[u]\\
       \bullet\ar@<0.5ex>[u]\ar@<-0.5ex>[u]\\
       \vdots\ar@<0.5ex>[u]\ar@<-0.5ex>[u]
      }
&&
\xymatrix@=20pt{
        \vdots\ar@<0.5ex>[d]\\
       \bullet\ar@<0.5ex>[u]\ar@<0.5ex>[d]\\
       \bullet\ar@<0.5ex>[u]\ar@<0.5ex>[d]\\
       \bullet\ar@<0.5ex>[u]\ar@<0.5ex>[d]\\
       \bullet\ar@<0.5ex>[u]
      } 
\end{align*}
	\caption{Components of $\Gamma_S(\Ar_1)$ containing a simple $\Ar_1$-module (left) and containing a string $\Ar_1$-module $\M[C]$ with $C$ an arrow of $\widehat{Q}$ (right) .}
	\label{fig2} 
\end{figure}

\begin{remark}\label{remexam}
For the remainder of this section, we need the following definition and property of morphisms between objects in $\widehat{\Ca}$. Following \cite{sch}, if $R$ is an object in $\widehat{\Ca}$, we denote by $t^\ast_R$ the quotient $\m_R/\m_R^2$ and call it the  {\it Zariski cotangent space} of $R$ over $\k$. Let $\theta: R\to R'$ be a morphism in $\widehat{\Ca}$.  It follows by \cite[Lemma 1.1]{sch} that $\theta$ is surjective if and only if the induced map of cotangent spaces $\theta^\ast: t^\ast_R\to t^\ast_{R'}$ is surjective. 
\end{remark}


\begin{proof}[Proof of Theorem \ref{thm2}.]
Let $\V$ is a string $\Ar_1$-module and let $\mathfrak{C}$ be the connected component of $\Gamma_S(\Ar_1)$ containing $\V$. It follows from Remark \ref{rem4.3} and Lemma \ref{lemma4.3} that $\mathfrak{C}$ either contains a simple $\Ar_1$-module or contains a string $\Ar_1$-module $\M[C]$, where $C$ is an arrow of $\Q$.  This proves (i). 

Assume that $\mathfrak{C}$ contains a simple $\Ar_1$-module. Without loss of generality assume that $\mathfrak{C}$ contains $\M[\1_{1_z}]$ for a fixed $z\in \Z$. If $\V$ is a simple $\Ar_1$-module, then by Schur's Lemma, $\End_{\Ar_1}(\V)=\k$. Assume that $\V$ is not simple. Then $\V$ is isomorphic either to $\M[(\alpha_z^{-1}\beta_z)^n]$ or to $\M[(\beta_{z+1}^\ast(\alpha^\ast_{z+1})^{-1})^n]$ for some integer $n\geq 1$. It follows by Remark \ref{rem4.5} that in both situations the only canonical endomorphism of $\V$ as in (\ref{canhom}) is the identity morphism, and thus $\End_{\Ar_1}(\V)=\k$. On the other hand, from the description of the indecomposable projective $\Ar_1$-modules as in Figure \ref{fig1}, we obtain that $\Omega_{\Ar_1}\V$ is either $\M[\1_{2_z}]$, or $\M[(\beta_z\alpha_z^{-1})^m]$, or $\M[((\beta_z^\ast)^{-1}\alpha_z^\ast)^m]$, for some $m\geq 1$. It follows that for all these cases, $\Hom_{\Ar_1}(\Omega_{\Ar_1}\V,\V)$ is either zero or generated by canonical morphisms as in (\ref{canhom}) that factor through one of the simple $\Ar_1$-modules $\M[\1_{2_z}]$ or $\M[\1_{1_z}]$. Moreover, these canonical morphisms factor through one of the projective $\Ar$-modules $\P_{1_z}$ or $\P_{2_{z+1}}$.  Thus by Remark \ref{rem1.1} (vi) we have that $\Ext_{\Ar_1}^1(\V,\V)=\SHom_{\Ar_1}(\Omega_{\Ar_1}\V,\V)=0$, which together with Remark \ref{remark3} (i) implies that $R(\Ar_1,\V)\cong \k$. This proves (ii).

Next assume that $\mathfrak{C}$ contains a string $\Ar_1$-module $\M[C]$, where $C$ is an arrow of $\Q$. Without loss of generality, assume that $C=\alpha_z$ for a fixed $z\in \Z$. If $\V$ is isomorphic to $\M[\alpha_z]$, then again by Remark \ref{rem4.5}, it follows that $\SEnd_{\Ar_1}(\V)=\End_{\Ar_1}(\V)=\k$, which implies that the versal deformation ring $R(\Ar_1,\M[\alpha_z])$ is universal. Assume that $\V$ is not isomorphic to $\M[\alpha_z]$. Then $\V$ is isomorphic to $\M[(\alpha_z\beta_z^{-1})^n\alpha_z]$ for some integer $n\geq 1$. In this situation, there is a canonical endomorphism of $\V$ as in (\ref{canhom}) that factors through $\M[\alpha_z]$ and which does not factor through a projective $\Ar_1$-module. Therefore $\dim_\k \SEnd_{\Ar_1}(\V)\geq 2$. On the other hand, note that $\Hom_{\Ar_1}(\Omega_{\Ar_1}\M[\alpha_z], \M[\alpha_z])$ is generated by a canonical morphism as in (\ref{canhom}) that factors through the simple $\Ar_1$-module $\M[\1_{2_z}]$ and which does not factor through a projective $\Ar_1$-module. Thus by Remark \ref{rem1.1} (vi), we have that $\dim_\k\Ext_{\Ar_1}^1(\M[\alpha_z],\M[\alpha_z])=\dim_\k \SHom_{\Ar_1}(\Omega_{\Ar_1}\M[\alpha_z], \M[\alpha_z])=1$, which together with Remark \ref{remark3} (i) implies that $R(\Ar_1,\M[\alpha_z])$ is a quotient of $\k[\![t]\!]$.  

In the following, we use similar arguments as those in \cite[Claim 4.5]{calderon-giraldo-rueda-velez} to prove that $R(\Ar_1,\M[\alpha_z])$ is isomorphic to $\k[\![t]\!]$.

Let $\V_0=\M[\alpha_z]$, and for all $n\geq 1$, let $\V_n=\M[(\alpha_z\beta_z^{-1})^n\alpha_z]$. Thus for all $n\geq 1$, we obtain a non-trivial endomorphism of $\V_n$ factoring through $\V_{n-1}$ given by 
\begin{equation*}
\widehat{\sigma}_n: \V_n\surjection \V_{n-1}\injection \V_n.
\end{equation*}

Let $n\geq 1$ be fixed. Note that  the kernel of $\widehat{\sigma}_n$ as well as the image of $\widehat{\sigma}_n^n$ are isomorphic to $\V_0=\M[\alpha_z]$. This implies that the $\Ar_1$-module $\V_n$ is naturally a $\k[\![t]\!]/(t^{n+1})\Ar_1$-module by letting $t$ act on $x\in \V_n$ as $t\cdot x = \widehat{\sigma}_n(x)$. In particular $t\V_n\cong \V_{n-1}$. Assume next that  $\{\bar{r}_1, \bar{r}_2\}$ is a $\k$-basis of $\V_0=\M[\alpha_z]$. Using the 
isomorphism $\V_n/t\V_n\cong \V_0$, we can lift the elements $\bar{r}_1$ and $\bar{r}_2$ to corresponding elements $r_1, r_2 \in \V_n$. It follows that $\{r_1, r_2\}$ is linearly independent over $\k$ and that $\{t^sr_1, t^sr_2: 1\leq 
s\leq n\}$ is a $\k$-basis of $t\V_n\cong V_{n-1}$. Therefore, $\{r_1, r_2\}$ is a $\k[\![t]\!]/(t^{n+1})$-basis of $\V_n$, i.e.  $\V_n$ is free over $\k[\![t]\!]/(t^{n+1})$. On the other hand, observe that $\V_n$ lies in a short exact sequence of $\Ar_1$-modules 

\begin{equation*}
0\to t\V_n\to \V_n\to \k\otimes_{\k[\![t]\!]/(t^{n+1})}\V_n\to 0,
\end{equation*}
which implies that there exists an isomorphism of $\Ar_1$-modules $\widehat{\phi}_n:\k\otimes_{\k[\![t]\!]/(t^{n+1})}\V_n\to V_0$. Therefore $(\V_n,\widehat{\phi}_n)$ is a lift of $\V_0$ over $\k[\![t]\!]/(t^{n+1})$.

Note that for all $n\geq 1$, there are canonical projections $\pi_{n,n-1}: \V_n\to \V_{n-1}$. Let $\W=\invlim \V_n$ and let $t$ act on $\W$ as $\invlim \pi_{n,n-1}$. It follows that  $\W$ is a $\k[\![t]\!]\otimes_\k\Ar_1$-module and $\k
\otimes_{\k[\![t]\!]}\W\cong \W/t\W\cong \V_0$. This implies that there exists an isomorphism of $\Ar_1$-modules $\widehat{\varphi} :\k\otimes_{\k[\![t]\!]}\W\to \V_0$. After arguing as before, we obtain that $(\W,\widehat{\varphi})$ is a lift of $\V_0$ over $\k[\![t]\!]$. Thus there exists a unique $\k$-algebra homomorphism $\theta:R(\Ar_1,\V_0)\to \k[\![t]\!]$ in $\widehat{\Ca}$ corresponding to the deformation defined by $(\W,\widehat{\varphi})$. Note that since  $\W/t^2\W\cong \V_1$ as $\Ar_1$-modules, we obtain that $\W/t^2\W$ defines a non-trivial lift of $\V_0$ over $\k[\![t]\!]/(t^2)$ and thus there exists a unique morphism  $\theta': R(\Ar_1,\V_0)\to \k[\![t]\!]/(t^2)$. Note that since the cotangent space (as in Remark \ref{remexam}) of $\k[\![t]\!]/(t^2)$ has  dimension $1$ over $\k$, it follows that $\theta'$ is also surjective. Moreover, if $\theta'':\k[\![t]\!]\to \k[\![t]\!]/(t^2)$ is the canonical surjection,  then the uniqueness of $\theta'$ implies that $\theta'=\theta''\circ \theta$. Since the morphism $(\theta'')^\ast$ of contangent spaces (as in Remark \ref{remexam}) is an isomorphism of $\k$-vector spaces, it follows that $\theta^\ast$ is a surjection. Therefore by Remark \ref{remexam}, $\theta$ is surjective, which together with the fact that  $R(\Ar_1,\V_0)$ is a quotient of $\k[\![t]\!]$, implies that $\theta$ is an isomorphism. Hence $R(\Ar_1,\V_0)=R(\Ar_1,\M[\alpha_z])\cong \k[\![t]\!]$. This proves (iii), which finishes the proof of Theorem \ref{thm2}.
\end{proof}

\section{Acknowledgments}  
The fourth author would like to express his gratitude to the other authors, faculty members, staff and students at the Instituto of Matem\'aticas as well as to the other people related to this work at the 
Universidad de Antioquia for their hospitality and support during the developing of this project.  All the authors are grateful with the anonymous referee who provided many suggestions and corrections that improved the quality and the readability of this article, and who also pointed out major errors in a previous version of the statement and proof of Theorem \ref{thm2}. 

\section{Data availability statement}  
This manuscript has no associated data.

\bibliographystyle{amsplain}
\bibliography{Deformations_Repetitive_AlgebraRev2}

\end{document}